\documentclass[11pt]{amsart}
\usepackage[utf8]{inputenc}
\usepackage{amssymb}
\usepackage{amsmath}
\usepackage{mathtools}
\usepackage{ifpdf}
\usepackage{vmargin}

\usepackage{graphicx}  
\usepackage{subfigure}
\usepackage{epstopdf}
\usepackage{caption}

\usepackage{setspace}
\usepackage{xcolor}

\usepackage{vmargin}

\usepackage{amsmath,amssymb,amsthm}
\usepackage{microtype}
\usepackage{xspace}

\usepackage[colorlinks = true,
			citecolor = blue,
			linkcolor = blue,
			urlcolor = black]{hyperref}

\usepackage[]{todonotes}		
\usepackage{mathtools}



\newcommand{\dd}{\mathop{}\!\mathrm{d}}
\let\del\partial

\newcommand{\xaddspace}[0]{\mathchoice{\hspace{-0.8em}}
                {\hspace{-0.6em}}
                {\hspace{-0.4em}}
                {\hspace{-0.3em}}
                 }

\newcommand{\xint}[1]{\int\foreach \i in {2,...,#1}{\xaddspace\int}}
\newcommand{\myiint}[0]{\xint{2}}
\renewcommand{\iint}{\myiint}

\let\Re\relax

\DeclareMathOperator{\Re}{Re}

\newcommand{\ee}{\textup{e}}
\newcommand{\ip}[1]{\langle#1\rangle}
\usepackage{scalerel,stackengine}
\stackMath
\newcommand\widecheck[1]{%
\savestack{\tmpbox}{\stretchto{%
  \scaleto{%
    \scalerel*[\widthof{\ensuremath{#1}}]{\kern-.6pt\bigwedge\kern-.6pt}%
    {\rule[-\textheight/2]{1ex}{\textheight}}
  }{\textheight}%
}{0.5ex}}%
\stackon[1pt]{#1}{\scalebox{-1}{\tmpbox}}%
}


\DeclareSymbolFont{bbold}{U}{bbold}{m}{n}
\DeclareSymbolFontAlphabet{\mathbbold}{bbold}

\newcommand{\coloneq}{\mathrel{\mathop:}=}

\newcommand{\AH}{H_{\textup{par}}}

\newtheorem{thm}{Theorem}[section]
\newtheorem{cor}[thm]{Corollary}
\newtheorem{lem}[thm]{Lemma}
\newtheorem{prop}[thm]{Proposition}
\theoremstyle{definition}
\newtheorem{defn}[thm]{Definition}

\theoremstyle{remark}
\newtheorem{rem}[thm]{Remark}
\numberwithin{equation}{section}


\DeclareMathOperator{\sech}{sech}
\newcommand\homog\mathring

\bibliographystyle{plain}

\begin{document}
	\onehalfspacing

\title[Riesz transforms and Sobolev spaces associated to $\AH$]{Riesz transforms and Sobolev spaces associated to the partial harmonic oscillator}


\author[X. Su]{Xiaoyan Su}
\address{Loughborough University,\ Epinal Way, \ Loughborough, \ Leicestershire,\ UK, \ LE11 3TU.}
\email{x.su2@lboro.ac.uk}


\author[Y. Wang]{Ying Wang}
\address{Mazarredo Zumarkalea, 14, Abando, \ Bilbao, \ Bizkaia, \ Spain,   \ 48009.}
\email{ywang@bcamath.org}

\author[G. Xu]{Guixiang Xu}
\address{Laboratory of Mathematics and Complex Systems,\
Ministry of Education,\
School of Mathematical Sciences,\
Beijing Normal University,\
Beijing, 100875, People's Republic of China.
}
\email{guixiang@bnu.edu.cn}

\subjclass[2010]{35J10, 44A05}

\keywords{ Symbolic calculus; Fractional integral;  Heat kernel; Hermite functions; Mehler's formula; Partial harmonic oscillator; Riesz transform; Sobolev spaces.}

\begin{abstract}

In this paper, our goal is to establish the Sobolev space associated to the partial harmonic oscillator. Based on its heat kernel estimate, we firstly give the definition of the fractional powers of the partial harmonic oscillator 
$$H_{\textup{par}}=-\partial_{\rho}^2-\Delta_x+|x|^2,$$
 and show that its negative powers are well defined on $L^p(\mathbb R^{d+1})$ for $p\in [1,\infty]$.   We then define associated Riesz transforms and show that they are bounded on classical Sobolev spaces by the calculus of symbols.

Secondly, by a factorization of the operator $H_{\textup{par}}$, we define two families of Sobolev spaces with positive integer indices, and show the equivalence between them by the boundedness of Riesz transforms. Moreover, 
the adapted symbolic calculus also implies the boundedness of Riesz transforms on the Sobolev spaces associated to the partial harmonic oscillator $H_{\textup{par}}$.

Lastly, as applications of our results, we obtain the revised  Hardy--Littlewood--Sobolev inequality, the Gagliardo--Nirenberg--Sobolev inequality, and Hardy's inequality in the potential space $L_{H_{\textup{par}}}^{\alpha, p}$.
\end{abstract}
 \maketitle

\section{Introduction} In this paper, we consider the following Schr\"odinger operator in $\mathbb R^{d+1}$,\begin{equation*}
     \AH=-\partial_{\rho}^2-\partial_{x_1}^2-\dots-\partial_{x_d}^2+|x|^2,
\end{equation*}
which we will call a partial harmonic oscillator. The anisotropy of the operator is used in \cite{AHS, Josser} to model magnetic traps in the Bose--Einstein condensation.
  It is well known in \cite{AHS} that $\AH$ is a self-adjoint operator,  $\sigma ( \AH)=\sigma_{\textup{ac}}( \AH)=[d, \infty) $  with embedded generalized eigenvalues $2k+d$ for $k\geq 1$, and that there is no singular spectrum.

In this paper, we investigate the Riesz transforms and  Sobolev spaces adapted to the operator $\AH$. The  fractional integrals and the Riesz transform associated to the classical Laplacian  $-\Delta_{\mathbb R^{d+1}}$ are fundamental in harmonic analysis and have significant applications in the study of partial differential equations, see \cite{Gra250, MSchlag, Stein} for example. 

Our work is motivated by the series of papers \cite{BT06, KMVZZ, KMVZZ2017,  KVZ09, KVZ, LiSZ, LiXZ, MMZ, Thangavelup87,  Thangavelup90, Thangavelu18}.  Especially in \cite{KMVZZ}, R. Killip, etc,  studied the Sobolev spaces adapted to the Schr\"odinger equation with the inverse-square potential by using the heat kernel estimates. These results were crucially used in \cite{KMVZZ2017, MMZ} to obtain the scattering result of the solution for nonlinear Schr\"odinger and wave equations with the inverse-square potential. Riesz transforms outside a convex  obstacle were studied in \cite{KVZ}. Other works can be found in \cite{KVZ09, LiSZ, LiXZ}.  The  Riesz transforms for the Hermite operators were also investigated in \cite{Thangavelup90,Thangavelu18}, some techniques in this paper are also inspired by that in \cite{BT06} where the Sobolev spaces adapted to the Hermite operator are discussed. 

Firstly, based on the Fourier--Hermite expansion and the heat kernel estimate for the operator $\AH$,  the fractional powers  $\AH^\alpha$ can be well-defined on $C_0^\infty(\mathbb R^{d+1})$ for $\alpha\in\mathbb R$.
For the operator $\AH^{\alpha}$ with the negative powers  ($\alpha<0$), we can obtain that its integral kernel  $K_\alpha(z,z')$ is controlled by an integrable function of $z-z'$, and therefore $\AH^{\alpha}$ can be  extended to an operator on $L^p(\mathbb R^{d+1})$ for $p\in [1, \infty]$.


After the study of fractional powers,  we can consider the potential spaces  $L_{\AH}^{\alpha, p}=\AH^{-\alpha/2}(L^p(\mathbb R^{d+1}))$, and  define the Sobolev spaces $W_{\AH}^{k,p}$, $k\in\mathbb N$ by using a factorization of the operator $\AH$. It turns out that they are  equivalent for $\alpha=k\in\mathbb N$, due to the $L^p$-boundedness of Riesz transforms  in the classical Sobolev spaces. Moreover, we can also show that the Riesz transforms are bounded on $L_{\AH}^{\alpha, p}$ by the symbolic calculus in class $G^m$. We will also compare three Sobolev spaces associated to $-\Delta_{\mathbb R^{d+1}}$, the Hermite operator $H=-\Delta_{\mathbb R^{d+1}}+\rho^2+|x| ^2$, and the operator $\AH$ respectively. Finally, we use the above results to show the revised Hardy--Littlewood--Sobolev inequality,  Gagliardo--Nirenberg--Sobolev inequality and Hardy's inequality in  the potential space $L_{\AH}^{\alpha, p}$.

 We remark that the operator $\AH$ is a polynomial perturbation of the Laplacian and some results have been established in \cite{Dziubanski09} by using the singular integral operators on nilpotent Lie groups.  Instead, our results  closely rely on the heat kernel estimate for the operator $\AH$ and  Mehler's formula.
By the way, the authors will establish a Mikhlin--H\"ormander multiplier theorem for the operator $\AH$ and obtain its applications in the Littlewood--Paley square function estimate together with the Khintchine inequality in the subsequent paper \cite{SuWangXu}.

The results in this paper can be adapted to obtain analogous results for  general partial harmonic oscillators 
$ L=-\Delta_x-\Delta_y+|x|^2, \ (x,y) \in \mathbb R^{d_1} \times \mathbb R^{d_2}.$ There are some scattering results for nonlinear Schr\"odinger equations with partial harmonic potentials, see \cite{ACS, Chengguo}, and some regularity problem of the fundamental solutions for the Schr\"odinger flows with partial harmonic oscillator has been discussed by Zelditch in \cite{Zelditch}.

Lastly, this paper is organized as follows: some preliminary results and adapted symbol class  are introduced in Section \ref{s:2-prelim}.  In Section \ref{s:3-frac-powers}, by the symblolic calculus, we discuss the fractional powers and the Riesz  transforms associated to the operator $\AH$.  In Section \ref{s:4-sobolev-spaces}, we define the Sobolev spaces and discuss some inclusion properties. In Section \ref{s:5-fun-ineqs}, we  show the revised Hardy--Littlewood--Sobolev inequality,  Gagliardo--Nirenberg--Sobolev inequality, and Hardy's inequality in the space $L_{\AH}^{\alpha, p}$.

\subsection*{Acknowledgements}
G. Xu is supported by NSFC (No. 112371240, No. 12431008) and  by National Key Research and Development Program of China (No. 2020YFA0712900).

\section{Preliminaries}
\label{s:2-prelim}
\subsection{Notation}
Let $D\coloneq\partial/i$. We write $z=(\rho, x)$ or $z'=(\rho', x)$ to denote elements in $\mathbb R^{d+1}$ with $\rho, \rho' \in \mathbb R$ and $x, x' \in \mathbb R^d$. We write $A\lesssim B$ if $A \leq C B$ for some constant $C>0$ and $A \gtrsim B$ if $A \geq c B$ for some constant $c>0$. We write the Japanese bracket as $\ip{x}=(1+|x|^2)^{1/2}$. 

We use $H$ and $\Delta_{\mathbb R^{d+1}}$ to denote the Hermite and Laplacian operators in $\mathbb R^{d+1}$ respectively, and  $\Delta$ to denote the Laplacian operator in $\mathbb R^d$, i.e.
\begin{align*}
    H=-\partial_\rho^2-\Delta +\rho^2+|x|^2, \quad -\Delta_{\mathbb R^{d+1}}=-\partial_\rho^2-\Delta.
\end{align*}
For any $\alpha>0$, we define the classical Sobolev space $W^{\alpha,p}(\mathbb R^{d+1})$ and the Hermite--Sobolev space $L_H^{\alpha,p}(\mathbb R^{d+1})$ as those  in \cite{BT06, Gra250, MSchlag, Stein}  as follows:
\begin{align*}
    W^{\alpha, p}(\mathbb R^{d+1})&=\{f\in L^p(\mathbb R^{d+1}): (1-\Delta_{\mathbb R^{d+1}})^{-\alpha/2} f\in L^p(\mathbb R^{d+1})\},\\
    L_{H}^{\alpha, p}(\mathbb R^{d+1})&=\{f\in L^p(\mathbb R^{d+1}):H^{-\alpha/2} f\in L^p(\mathbb R^{d+1})\}.
\end{align*}

The Fourier transform and the inverse Fourier transform of a Schwartz function $f\in \mathcal S(\mathbb R)$ are defined by 
\begin{align*}
    \mathcal F^{\pm}_{\rho}f (\tau) =\frac{1}{(2\pi)^{1/2}}\int_{\mathbb R} e^{\mp i  \rho \cdot \tau}  f(\rho) \dd \rho, 
\end{align*}
and they are similar in the higher dimensional cases.

\subsection{Hermite functions} We recall some basic results about Hermite functions from \cite{BGV, folland, Thangavelu}. For $k=0, 1, \ldots,$ the Hermite functions $h_k(x)$ on $\mathbb R$ are defined by
 \begin{align*}
     h_k(x)=(2^k k! \sqrt{\pi})^{-1/2} (-1)^k \frac{\dd^k}{\dd x^k}(\ee^{-x^2}) \ee^{-x^2/2}, 
 \end{align*}
which form a complete orthonormal basis for $L^2(\mathbb R)$. For $\mu \in \mathbb N^d$, the Hermite functions $\Phi_\mu(x)$  on $\mathbb R^d$ are defined by taking the product of the $1$-dimensional Hermite functions $h_{\mu_j}$, 
\begin{align*}
    \Phi_\mu(x)=\prod_{j=1}^d h_{\mu_j}(x_j).
\end{align*}
It is well known  that they are eigenfunctions of the operator $-\Delta+|x|^2$ with eigenvalues $2|
 \mu|+d$,
 \begin{align*}
   (- \Delta+|x|^2 )\Phi_\mu(x)= (2|\mu|+d)\Phi_\mu(x).
 \end{align*}
 Let us define the following first order differential operators for $1\leq j \leq d$ as
\begin{align*}
A_0=-\partial_{\rho}, \quad A^*_0=\partial_{\rho}, \ A_j=-\frac{\partial}{\partial x_j}+x_j, \ A_{-j}= A_j^*=\frac{\partial}{\partial {x_j}}+x_j.
    \end{align*}
Then we have
\begin{align}\label{derivative of Hermite function}
A_j \Phi_{\mu}  =\sqrt{2(\mu_j+1)}  \Phi_{\mu+e_j},  \;\text{and}\;   A_{-j} \Phi_{\mu}  =\sqrt{2\mu_j}  \Phi_{\mu-e_j},
\end{align}
where $e_j$ is the $j$th unit vector in $\mathbb N^d$.
We use $A_j$ and its adjoint operator $A^{*}_j$ to  factorize the operator $ \AH$ as follows,
\begin{align*}
  \AH=\frac{1}{2} \sum_{j=0} ^d (A_j A_j^*+A_j^* A_j).
\end{align*}
Denote by $P_k$ the spectral projection to the $k$th eigenspace of $-\Delta+|x|^2$,
 \begin{align}
P_kf(x)=\int_{\mathbb R^d} \sum_{|\mu|=k}\Phi_\mu(x)\Phi_\mu(x') f(x')\dd x'. \label{e:P_k}
 \end{align}
 These projections  are integral operators with kernels
 \begin{align*}
     \Phi_k(x,x')=\sum_{|\mu|=k}\Phi_\mu(x)\Phi_\mu(x').
 \end{align*}
The useful Mehler formula for $ \Phi_k(x,x')$ is
\begin{align}\label{Mehler's formula}
   \sum_{k=0} ^\infty r^k \Phi_k(x,x') =\pi ^{-d/2}(1-r^2)^{-d/2} e^{-\frac{1}{2}\frac{1+r^2}{1-r^2}(|x|^2+|x'|^2)+\frac{2r x \cdot x'}{1-r^2}},
\end{align}
for $0<r<1$, please refer to \cite{BGV, Thangavelu}.
\subsection{Symbol class}
First, we recall the definition of the standard symbol class. We denote the  spatial variable by $z=(\rho,x)$, and the frequency variable by $\omega = (\tau,\xi)$.

\begin{defn}[{\cite{Taylor, Taylor91}}] Let $0\leq \epsilon, \delta \leq 1$. We say a function $\sigma(z,\omega) \in C^{\infty}(\mathbb R^{d+1} \times \mathbb R^{d+1})$ is a symbol of order $m$ with type $(\epsilon, \delta)$, written $\sigma \in S^m_{\epsilon,\delta}$ if for all multi-indices $\beta, \gamma$, there is a constant $C_{\beta, \gamma}$ such that
\begin{align*}
  |D_{z}^{\beta} D_{\omega}^\gamma \sigma(z,\omega)|   \leq C_{\beta, \gamma}\ip{ \omega}^{m-\epsilon |\gamma|+\delta |\beta|}.
\end{align*}

\end{defn}

\begin{defn}[{\cite{NicolaRodino}}] We say a function $\sigma(z, \omega) \in C^{\infty}(\mathbb R^{d+1} \times \mathbb R^{d+1})$ belongs to $\Gamma^m$ if for all multi-indices $\beta, \gamma$, there is constant $C_{\beta, \gamma}$   such that  \begin{align*}
    |D_{z}^{\beta} D_{\omega}^\gamma \sigma( z,\omega)|   \leq C_{\beta, \gamma}\ip{|z|+|\omega|}^{m- |\beta|-|\gamma|}.
\end{align*}
\end{defn}

\begin{prop}[\cite{Thangavelu18}] Let $\alpha<0$.  The symbols of the negative fractional powers $H^{\alpha}$ of the Hermite operators  belong to $\Gamma^{2\alpha}$ .
\end{prop}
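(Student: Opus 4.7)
The strategy is to realize $\sigma_{H^\alpha}$ as an explicit $t$-integral coming from the heat semigroup and then to read off the $\Gamma^{2\alpha}$ estimates directly. Since $\alpha<0$ and $H$ is strictly positive, the spectral theorem yields
\[
H^{\alpha} = \frac{1}{\Gamma(-\alpha)} \int_0^\infty t^{-\alpha-1} \ee^{-tH} \dd t,
\]
so it suffices to compute the Weyl symbol $\sigma_t$ of $\ee^{-tH}$ on $\mathbb R^{d+1}$ and integrate. Mehler's formula \eqref{Mehler's formula}, applied in each coordinate, expresses the kernel $K_t(z,z')$ of $\ee^{-tH}$ as a pure Gaussian in $(z,z')$. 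Inserting this into $\sigma_t(z,\omega)=\int \ee^{-i\omega\cdot u}K_t(z+u/2, z-u/2)\dd u$, completing the square in $u$, and using the identities $\coth(2t)-1/\sinh(2t)=\tanh t$ and $\coth(2t)+1/\sinh(2t)=\coth t$, one obtains the classical formula
\[
\sigma_t(z,\omega) = (\cosh t)^{-(d+1)} \ee^{-\tanh(t)(|z|^2+|\omega|^2)}.
\]

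Next I need a phase-space derivative estimate. Writing $a(t)=\tanh t$, the standard Hermite polynomial/Gaussian identity states that $\del_z^\beta \del_\omega^\gamma \ee^{-a(|z|^2+|\omega|^2)}$ equals $a^{(|\beta|+|\gamma|)/2}$ times a multivariable Hermite polynomial in $\sqrt{a}\,z$ and $\sqrt{a}\,\omega$ of total degree $|\beta|+|\gamma|$, times the same Gaussian. Absorbing the polynomial factor into half of the Gaussian yields
\[
\bigl|\del_z^\beta \del_\omega^\gamma \sigma_t(z,\omega)\bigr| \lesssim_{\beta,\gamma} (\cosh t)^{-(d+1)} \, a(t)^{(|\beta|+|\gamma|)/2} \, \ee^{-a(t)(|z|^2+|\omega|^2)/2}.
\]
Differentiation under the integral is legitimate because $-\alpha-1>-1$ and $a(t)\asymp t$ at $t=0$, so all integrals converge absolutely.

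The final step is to split the $t$-integral at $t=1$. On $[1,\infty)$ the factor $(\cosh t)^{-(d+1)}$ decays exponentially and $\tanh t \geq 2/3$, so that contribution is bounded by $C\ee^{-c(|z|^2+|\omega|^2)}$, which is in turn dominated by $\ip{|z|+|\omega|}^{2\alpha-|\beta|-|\gamma|}$ since $\alpha<0$. On $(0,1]$, set $R=(|z|^2+|\omega|^2)/2$ and $m=(|\beta|+|\gamma|)/2$; the bounds $\cosh t\asymp 1$ and $\tanh t\asymp t$, together with the substitution $s=tR$, convert the integral into
\[
R^{\alpha-m}\int_0^R s^{-\alpha-1+m}\ee^{-s}\dd s,
\]
whose inner factor is bounded by $\Gamma(-\alpha+m)$ as $R\to\infty$ and of size $R^{-\alpha+m}/(-\alpha+m)$ as $R\to 0$. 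Combining the two regimes yields $\ip{|z|+|\omega|}^{2\alpha-|\beta|-|\gamma|}$, which is exactly the $\Gamma^{2\alpha}$ estimate. The only genuine obstacle is the careful bookkeeping of the Hermite polynomial factors in the middle step; once the gain $a(t)^{(|\beta|+|\gamma|)/2}$ per derivative is in hand, the rest reduces to a routine analysis of the Gamma integral split at $s=R$.
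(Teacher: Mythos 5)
Your argument is correct, but note first that the paper does not prove this proposition at all --- it is imported verbatim from \cite{Thangavelu18} --- so the fair comparison is with the paper's proof of the analogous Lemma \ref{symbol of fractional powers} for $\AH$. There the authors share your overall strategy (subordination $H^{\alpha}=\Gamma(-\alpha)^{-1}\int_0^\infty t^{-\alpha-1}\ee^{-tH}\dd t$, Mehler's formula for the semigroup symbol, and a split of the $t$-integral at $t=1$), but they work with the Kohn--Nirenberg symbol, which for the anisotropic operator carries a complex cross term $2ix\cdot\xi \sech 2t\,(\sinh t)^2$ in the exponent, and they control derivatives via Fa\`a di Bruno together with the bounds $|\del_X^\beta b|\lesssim |X|^{2-|\beta|}t$. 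You instead use the Weyl symbol, which for the full isotropic Hermite operator collapses to the real Gaussian $(\cosh t)^{-(d+1)}\ee^{-\tanh(t)(|z|^2+|\omega|^2)}$, so the derivative gain $a(t)^{(|\beta|+|\gamma|)/2}$ per pair of derivatives falls out of the one-variable Hermite identity and the final estimate reduces to the clean incomplete-Gamma computation $R^{\alpha-m}\int_0^R s^{-\alpha-1+m}\ee^{-s}\dd s$; all of these steps check out, including the convergence at $s=0$ from $-\alpha-1+m>-1$. What your route buys is a genuinely simpler bookkeeping (no Fa\`a di Bruno, no complex phase); what it costs is one extra remark you omitted: the paper's quantization $T_\sigma$ is the left (Kohn--Nirenberg) one, so to conclude that \emph{the} symbol of $H^\alpha$ in that convention lies in $\Gamma^{2\alpha}$ you should add that the Weyl-to-Kohn--Nirenberg conversion $a=\ee^{\frac{i}{2}D_z\cdot D_\omega}a^w$ preserves the Shubin classes $\Gamma^m$ (standard, since each term of the asymptotic expansion drops the order by two and the remainder is controlled). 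With that one sentence added, the proof is complete.
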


For later use, we need introduce a new symbol class, adapted to the operator $\AH$, in which the upper bounds also depend on partial spatial variable $x$.

\begin{defn}\label{generalized symbol} We say that a function $\sigma(\rho, x, \tau, \xi) \in C^{\infty}(\mathbb R^{d+1} \times \mathbb R^{d+1})$ belongs to $G^m$ if for all multi-indices $\beta, \gamma$, there is constant $C_{\beta, \gamma}$   such that  \begin{align*}
    |D_{z}^{\beta} D_{\omega}^\gamma \sigma(\rho, x, \tau, \xi)|   \leq C_{\beta, \gamma}\ip{|x|+|\omega|}^{m- |\beta|-|\gamma|}.
\end{align*}
\end{defn}
It is easy to see that   $\Gamma^m \subset G^m \subset S_{1,0}^{m}$ for $m\leq 0$.

We  quantize a symbol in  $ \sigma \in G^m$ by defining
\begin{align*}
    T_{\sigma} f(z)=\frac{1}{(2\pi)^{d+1}} \iint_{{\mathbb R^{d+1}\times \mathbb R^{d+1}}}\ee^{i (z-z') \omega} \sigma (z, \omega) f(z') \dd z' \dd \omega.
\end{align*}
Using integration by parts, we know that $T_{\sigma}$ is bounded from $\mathcal S(\mathbb R ^{d+1})$ to  $\mathcal S(\mathbb R ^{d+1})$ if  $ \sigma \in G^m$.
For $p\in G^{m_1}, q \in G^{m_2}$,  define $ p \# q(z,\omega)$  by the oscillatory integral
    \begin{align*}
        p \# q(z, \omega) \coloneq &\frac{1}{(2\pi)^{d+1}} \iint_{\mathbb R^{d+1} \times \mathbb R^{d+1}} \ee^{-i z' \omega'}  p(z, \omega+\omega') q(z+z', \omega) \dd z' \dd \omega'.
         \end{align*}

\begin{prop} \label{composition}
If $p\in G^{m }$, $q \in G^{m'} $ then
  $ p \# q\in G^{m+m'}$. In particular, if $m+m'\le0$, then $p\#q \in S^0_{1,0}$.
\end{prop}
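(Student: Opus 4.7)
The strategy is to adapt the classical pseudodifferential symbol calculus argument, as carried out in Nicola--Rodino for the class $\Gamma^m$, to our class $G^m$. The key observation enabling this adaptation is that $G^m$ enjoys exactly the same decay-under-differentiation property as $\Gamma^m$ -- every derivative in $z$ or $\omega$ reduces the order by one -- and the weight $\langle |x|+|\omega|\rangle$ behaves multiplicatively, so $G^a \cdot G^b \subset G^{a+b}$ by the Leibniz rule. The target is to establish the asymptotic expansion
\begin{equation*}
p \# q(z, \omega) \sim \sum_{\alpha} \frac{(-i)^{|\alpha|}}{\alpha!} \partial_\omega^\alpha p(z, \omega) \, \partial_z^\alpha q(z, \omega),
\end{equation*}
whose leading term $p(z,\omega)q(z,\omega)$ already places $p\#q$ in $G^{m+m'}$ modulo lower-order contributions.

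The first step is standard: Taylor-expand $p(z, \omega + \omega')$ in $\omega'$ around $\omega$ to order $N$, substitute into the oscillatory integral defining $p\#q$, and integrate by parts in $z'$ to convert each factor $(\omega')^\alpha$ into $(-i)^{|\alpha|}\partial_{z'}^\alpha$ acting on $q(z+z',\omega)$. Fourier inversion in $(z', \omega')$ then produces the truncated sum plus a remainder $R_N$. For each $|\alpha|< N$, since $\partial_\omega^\alpha p \in G^{m-|\alpha|}$ and $\partial_z^\alpha q \in G^{m'-|\alpha|}$, their product lies in $G^{m+m'-2|\alpha|} \subset G^{m+m'}$, so the main-term sum is in $G^{m+m'}$. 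The ``in particular'' claim then follows immediately from the already-noted inclusion $G^{m+m'} \subset S^{m+m'}_{1,0} \subset S^0_{1,0}$ whenever $m+m' \leq 0$.

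The main obstacle is showing that the remainder $R_N$ itself lies in $G^{m+m'}$ (in fact in $G^{m+m'-N}$ for $N$ large, so that derivatives up to any prescribed order can be controlled). Using the integral form of Taylor's theorem, $R_N$ becomes an oscillatory integral whose integrand involves $(\omega')^\alpha \int_0^1 (1-t)^{N-1} \partial_\omega^\alpha p(z, \omega+t\omega')\,dt \cdot q(z+z',\omega)$. The standard device is to insert operators of the form $(1+|\omega'|^2)^{-M}(1-\Delta_{z'})^M$ along with their dual, and to split the $\omega'$-integral into the regions $|\omega'| \leq \tfrac12 \langle |x|+|\omega|\rangle$ and its complement. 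On the first region the peeling estimate $\langle |x|+|\omega+t\omega'|\rangle \gtrsim \langle |x|+|\omega|\rangle$ for $t \in [0,1]$ transfers the $G$-bound on $\partial_\omega^\alpha p$ onto the desired weight in $(z,\omega)$; on the complement the decay $(1+|\omega'|^2)^{-M}$ dominates. The asymmetry of $G^m$ -- its weight involves $|x|$ but not $|\rho|$ -- is mostly benign, because $\partial_\rho$ and $\partial_\tau$ still improve the $G$-order by one, so the standard manipulations adapt with only cosmetic changes. Iterating this estimate after differentiating $R_N$ by $\partial_z^\beta \partial_\omega^\gamma$ and choosing $N, M$ large enough yields the bound $|\partial_z^\beta\partial_\omega^\gamma R_N| \lesssim \langle |x|+|\omega|\rangle^{m+m'-|\beta|-|\gamma|}$, which completes the argument.
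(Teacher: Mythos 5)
Your proof is correct, but it follows a genuinely different route from the one in the paper. You establish the full asymptotic expansion $p\#q\sim\sum_\alpha\frac{(-i)^{|\alpha|}}{\alpha!}\partial_\omega^\alpha p\,\partial_z^\alpha q$ by Taylor-expanding $p(z,\omega+\omega')$ in $\omega'$, placing the truncated sum in $G^{m+m'}$ via the product rule $G^a\cdot G^b\subset G^{a+b}$, and then estimating the Taylor remainder with integration by parts and a splitting of the $\omega'$-integral at $|\omega'|\sim\tfrac12\langle|x|+|\omega|\rangle$. The paper makes no expansion at all: it directly bounds the oscillatory integral for $p\#q$ after inserting $\langle\nabla_{z'}\rangle^{2k}$ and $\langle\nabla_{\omega'}\rangle^{2k}$, and runs a four-way case analysis on the signs of $m$ and $m'$, splitting both the $z'$- and $\omega'$-domains at $\tfrac12(|x|+|\omega|)$ and using Peetre's inequality; derivative bounds then come for free from $D^\alpha p\in G^{m-|\alpha|}$, $D^\beta q\in G^{m'-|\beta|}$. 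Your region-splitting for the remainder is essentially the paper's $B_<\cup B_\ge$ decomposition, so the technical core overlaps, but your architecture is heavier and your remainder analysis is only sketched (you must also gain decay in $z'$, via $\langle z'\rangle^{-2M}\langle\nabla_{\omega'}\rangle^{2M}$ acting on the $p$-factor, to absorb the $\langle|x+x'|+|\omega|\rangle^{m'}$ growth of the $q$-factor by Peetre --- the same point the paper handles with its $A_<\cup A_\ge$ split). What your approach buys is strictly more information: the leading-symbol expansion, which would be needed for a parametrix construction or for computing commutators to leading order. What the paper's approach buys is brevity and self-containedness, since only the crude bound $|D_z^\beta D_\omega^\gamma(p\#q)|\lesssim\langle|x|+|\omega|\rangle^{m+m'-|\beta|-|\gamma|}$ is ever used downstream. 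Your observation that the anisotropy of $G^m$ (weight in $|x|$ but not $|\rho|$) is benign is correct and matches the paper's parenthetical remark in its Case 3.
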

\begin{proof}
 It suffices to prove that $|p\#q| \lesssim \ip{|x|+|\omega|}^{m+m'}$. In fact,  the derivative estimates of $p\#q$ will follow from the fact that $D^\alpha p \in G^{ m-|\alpha|}$ and   $D^\beta q \in G^{m'-|\beta|}$.
    By definition, we have for any sufficiently large $k$ that,
\begin{align*}
     &p\# q(z,\omega) = \iint_{ \mathbb R^{d+1}\times\mathbb R^{d+1}} F \dd z' \dd \omega'
     \\
     &= \frac{1}{(2\pi)^{d+1}} \iint_{\mathbb R^{d+1}\times\mathbb R^{d+1}} \!\!\ee^{-i z' \omega'}  \ip{\nabla_{z'}}^{2k} \frac{ q(z+z', \omega)}{\ip{ z'}^{2k} }
 \ip{\nabla_{\omega'}}^{2k}  \frac{p(z, \omega+\omega')}{ \ip{\omega'} ^{2k}} \dd z' \dd \omega'.
\end{align*}
The integrand $F$ is controlled by \[ \frac{ p(z,\omega+\omega') q(z+z', \omega)}{\ip{ z'}^{2k} \ip{ \omega'}^{2k} } \lesssim \frac{\ip{|x| + |\omega+\omega'| }^m \ip{|x+x'|+|\omega|}^{m'}  }{\ip{ z'}^{2k} \ip{ \omega'}^{2k} } . \]
\subsubsection*{Case 1: $m\ge 0,m'\ge 0$}  Peetre's inequality immediately gives for $k\gg1$
\[ |p\# q| \lesssim  \ip{|x|+|\omega|}^{m+m'} \iint_{ \mathbb R^{d+1}\times\mathbb R^{d+1}}\! \ip{z'}^{m'-2k}\ip{\omega'}^{m-2k}\dd z' \dd \omega' \lesssim \ip{|x|+|\omega|}^{m+m'} . \]
\subsubsection*{Case 2: $m\ge 0,m'<0$}
We partition
$ \mathbb R^{d+1}_{z'} = A_< \cup A_\ge,$ where

\begin{align*}
    A_{<}  = \left \{ |z'|<\frac{|x|+|\omega|}2\right\} ,\qquad  A_{\ge} =  \left \{ |z'|\ge\frac{|x|+|\omega|}2\right\},
\end{align*}

\textbf{Estimate On $A_<$}: note that $|x+x'|+|\omega|\ge |x|-|x'|+|\omega| \ge \frac{|x|+|\omega|}2$, we have $ \ip{|x+x'|+|\omega|}^{m'} \lesssim \ip{ |x| + |\omega|}^{m'}  $ and

\[ \left|\iint_{A_<\times \mathbb R^{d+1}} F \dd z' \dd \omega'\right| \lesssim  \ip{ |x| + |\omega|}^{m+m'} \iint_{\mathbb R^{d+1}\times\mathbb R^{d+1}} \ip{ z'}^{-2k}\ip{ \omega'}^{-2k+m} \dd z' \dd\omega'.  \]

\textbf{Estimate On $A_\ge$}: we have  $\ip{|x+x'|+|\omega|}^{m'}\le 1$ and  use the denominator to control the $\ip{|x| + |\omega| }^m$ term:
\[ \left|\iint_{A_\ge\times\mathbb R^{d+1} } F \dd z' \dd \omega' \right|\lesssim \ip{ |x| + |\omega|}^{m+m'}\iint_{\mathbb R^{d+1}\times\mathbb R^{d+1}} \ip{ z'}^{-2k-m'}\ip{ \omega'}^{-2k} \dd z' \dd \omega'.\]
 
Both bounds are controlled by $ \ip{|x|+ |\omega| }^{m+m'}$ if $k$ is large enough.

\subsubsection*{Case 3: $m<0$, $m'\ge 0$}
We decompose $\mathbb R_{\omega'}^{d+1} = B_< \cup B_\ge $, where \[ B_{<}  = \left \{ |\omega'|<\frac{|x|+|\omega|}2\right\} , \qquad B_{\ge} =  \left \{ |\omega'|\ge\frac{|x|+|\omega|}2\right\}. \]
On the set $B_<$,  we have $\ip{ |x|+|\omega+\omega'|}^{m} \lesssim \ip{ |x|+|\omega|}^{m}$, and
on the set $B_\geq $ we have
$
    \ip{ \omega'}^m \lesssim \ip{ |x|+|\omega|}^{m},
$ so we can obtain the result by  similar argument in \emph{Case 2} (despite the fact that the required upper bounds depend on $\tau$, but not on $\rho$).

\subsubsection*{Case 4: $m<0$, $m'<0$} We use the decompositions as follows $$ \mathbb R^{d+1}\times \mathbb R^{d+1} =(A_<\times  B_< ) \cup (  A_{<}\times B_\ge) \cup (  A_\ge \times B_<) \cup ( A_\ge \times  B_\ge). $$
For the above decomposition, we have the following estimates:
\begin{align*}
   A_<: &\ \ip{|x+x'|+|\omega|}^{m'} \lesssim \ip{ |x| + |\omega|}^{m'},\\ 
   A_>: &\  \ip{|x+x'|+|\omega|}^{m'} \lesssim 1,\\
   B_<: & \ \ip{ |x|+|\omega+\omega'|}^{m} \lesssim \ip{ |x|+|\omega|}^{m},\\
   B_>: &\  \ip{ |x|+|\omega+\omega'|}^{m} \lesssim 1.\end{align*}
By combining the above estimates on the respective sets, we have
\begin{gather*}
\left| \iint_{A_< \times B_< } F \dd z' \dd \omega' \right|\lesssim \ip{ |x| + |\omega|}^{m+m'}\iint_{\mathbb R^{d+1}\times\mathbb R^{d+1}} \ip{ z'}^{-2k}\ip{ \omega'}^{-2k} \dd z' \dd \omega',\\
          \left|\iint_{A_\ge \times B_\ge  } \!\!F \dd z' \dd \omega'\right| \lesssim \ip{ |x| + |\omega|}^{m+m'}\iint_{\mathbb R^{d+1}\times\mathbb R^{d+1}}\!\! \ip{ z'}^{-2k-m'}\ip{ \omega'}^{-2k-m} \dd z' \dd \omega'.\end{gather*}
The integrals over $A_<\times B_\ge$ and $A_\ge \times B_<$ are similar, and both of them are bounded by $\ip{ |x| + |\omega|}^{m+m'}$ if $k$ is sufficiently large.

 This completes the proof.
 \end{proof}

\section{Fractional powers of  the operator $ \AH$}
\label{s:3-frac-powers}
\subsection{Functional Calculus for the operator $ \AH$ } Fix $z=(\rho, x) \in \mathbb R^{d+1}$. By the continuous Fourier transform in $\rho \in \mathbb R$  and the discrete Hermite expansion in $x \in \mathbb R^{d}$ of the operator $\AH$,  we  can write $\AH f$ for a function $f\in C_0^\infty(\mathbb R^{d+1})$ as
\begin{align*}
     \AH f(\rho,x)&=
    \sum_{\mu} \frac{1}{\sqrt{2\pi}}\int_{\mathbb R} \ee^{i \rho \tau}(\tau^2+2|\mu|+d)  (\mathcal F_{\rho} f(\tau, \cdot), \Phi_{\mu}(\cdot)) \Phi_{\mu} (x)\dd \tau\\
   &= \sum_{k=0}^\infty\frac{1}{\sqrt{2\pi}} \int_{\mathbb R} \ee^{i \rho\tau }(\tau^2+2k+d)P_k \mathcal F_{\rho} f(\tau, x) \dd \tau,
    \end{align*}
where $\mathcal F_{\rho} f$ is the Fourier transform with respect to $\rho$,  and $P_k$ is the projection operator in \eqref{e:P_k}.
Thus, for a Borel measurable function $F$ defined on $\mathbb R_{+}$, we can define the operator $F(\AH)$ by the spectral theory as
\begin{align}\label{FCofAH}
    F(\AH )f(\rho, x)=\sum_{k=0}^\infty \frac{1}{\sqrt{2\pi}} \int_{\mathbb R} \ee^{i \rho \tau}F(\tau^2+2k+d)P_k \mathcal F_{\rho} f(\tau, x) \dd \tau,
\end{align}
so long as the right hand side makes sense.

In particular,  the heat semigroup $\ee^{-t{\AH}}$ can be  defined by
\begin{align}
    \ee^{-t \AH}f(\rho,x)&=  \sum_{\mu}\frac{1}{\sqrt{2\pi}} \int_{\mathbb R}\ee^{i\tau \rho} \ee^{-t(\tau^2+2|\mu|+d)  } ((\mathcal F_{\rho} f)(\tau, \cdot ), \Phi_{\mu}(\cdot)) \Phi_{\mu} (x)\dd \tau\notag \\
    &=\sum_{k=0}^\infty \frac{1}{\sqrt{2\pi}}\int_{\mathbb R} \ee^{i\tau \rho} \ee^{-t(\tau^2+2k+d)  }  {P_k}(\mathcal F_{\rho} f)(\tau, x)  \dd \tau\label{heat-formula-1}
\end{align}
for any $f\in C_0^\infty(\mathbb R^{d+1})$.
By Mehler's formula \eqref{Mehler's formula}, the integral kernel of the operator $\ee^{-t \AH}$ is
\begin{align}\label{kernel of semigroup}
   E(t,z,z')
    =2^{-\frac{d+2}{2}}\pi^{-\frac{d+1}{2}}t^{-1/2}(\sinh 2t)^{-d/2} \ee^{-B(t,z,z')},
\end{align}
where
\begin{align*}
    B(t, z, z')=\frac{1}{4}(2\coth2t-\tanh t)|x-x'|^2+\frac{\tanh t }{4}|x+x'|^2+\frac{(\rho-\rho')^2}{4t}.
\end{align*}

\subsection{Fractional powers of the operator $\AH$ and the heat semigroup}\label{sub: heat kernel rep}On the one hand,
for  $\alpha \in \mathbb R$, we can define the fractional powers $\AH^\alpha$ on $C_0^\infty(\mathbb R^{d+1})$ by
\begin{align*}
\AH^{\alpha}f(\rho,x)&=\sum_{k=0}^\infty\frac{1}{\sqrt{2\pi}} \int_{\mathbb R} \ee^{i\tau \rho}(\tau^2+2k+d)^{\alpha}  {P_k} (\mathcal F_{\rho} f)(\tau, x)  \dd \tau.
\end{align*}
Simple calculation shows that the identity 
\begin{align}\label{power property}
\AH^{\alpha} \cdot    \AH^{\beta}f = \AH^{\alpha+\beta} f,
\end{align}
holds for  $ f\in C_0^\infty(\mathbb R^{d+1})$ and  $\alpha,\beta\in\mathbb R$. 

On the other hand,  we can also formulate  the powers $ \AH^{ \alpha}$  with $\alpha\in \mathbb R$  by  the semigroup $\ee^{-t \AH}$ and the Gamma function.  Firstly, for any $\alpha>0$, the negative powers $ \AH^{-\alpha}$ can be written as 
 \begin{align}\label{negative power and semigroup}
      \AH^{-\alpha} f(\rho, x) =\frac{1}{ \Gamma(\alpha)} \int_0^\infty t^{\alpha-1} \ee^{-t \AH} f(\rho, x)  \dd t.
 \end{align}
Notice that the integral kernel of the operator $\AH^{-\alpha}$ is positive  since the integral kernel  \eqref{kernel of semigroup} of $\ee^{-t \AH} $ is positive. Similarly for any $a\in \mathbb R$ and $d>- a$, we have
 \begin{align}\label{kernel of H+a}
     (\AH+a)^{-\alpha} f(\rho, x) =\frac{1}{ \Gamma(\alpha)} \int_0^\infty t^{\alpha-1}\ee^{-t a} \ee^{-t \AH} f(\rho, x)  \dd t
 \end{align}
so long as the integral exists.

We now express the positive fractional powers in terms of derivatives of the semigroup. Let $N$ be the smallest integer which is larger than $\alpha$. Using the identity  $-\AH \ee^{-t\AH}=\frac{\dd}{\dd t} \ee^{-t\AH}$ and \eqref{power property}, we have for any $f\in C_0^\infty(\mathbb R^{d+1})$ that 
\begin{align}\label{positive power and semigroup}
    \AH^{\alpha} f(\rho, x)=\frac{(-1)^N}{\Gamma(N-\alpha)}\int_{0}^\infty t^{N-\alpha-1} \frac{\dd^N }{\dd t^N} \ee^{-t\AH}f(\rho, x) \dd t.
\end{align}

We have the following properties between $\AH^{\alpha}$ and $A_{\pm j}$ for $1\leq j\leq d$.
\begin{lem} \label{commute lemma}
 For any $\alpha\in \mathbb R$, $d\ge3$, $1\le j\le d$, and $f\in C_0^\infty(\mathbb R^{d+1})$, we have
\begin{align*}
    A_0  \AH^{\alpha} f &=   \AH^{\alpha} A_0f,\\
    A_j  \AH^{\alpha}f &=( \AH-2)^{\alpha} A_j f, &  A_{-j}  \AH^{\alpha} f =( \AH+2)^\alpha A_{-j} f,\\
       \AH^{\alpha} A_j f&= A_j ( \AH+2)^\alpha f,
       &   \AH^{\alpha}   A_{-j} f =A_{-j} ( \AH-2)^{\alpha}f.
        \end{align*}
\end{lem}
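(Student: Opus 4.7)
The plan is to apply the spectral decomposition \eqref{FCofAH} to both sides of each identity and reduce the verification to an algebraic commutation relation between the first-order operators $A_0, A_j, A_{-j}$ and the projections $P_k$.

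First I would settle the $A_0$ case. Since $A_0=-\partial_\rho$ acts only in $\rho$, it commutes with every Hermite projection $P_k$, and on the Fourier side it becomes multiplication by $-i\tau$, which commutes with the spectral multiplier $(\tau^2+2k+d)^\alpha$. Inserting $A_0$ inside \eqref{FCofAH} therefore yields $A_0\AH^\alpha f=\AH^\alpha A_0 f$ directly.

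Next I would establish the key raising/lowering identities
\[
A_j P_k = P_{k+1} A_j, \qquad A_{-j} P_k = P_{k-1} A_{-j} \quad (1\le j\le d),
\]
with the convention $P_{-1}=0$. These follow from \eqref{derivative of Hermite function}, the orthonormality of the $\Phi_\mu$, and the adjoint relation $A_j^* = A_{-j}$: writing $P_k g=\sum_{|\mu|=k}(g,\Phi_\mu)\Phi_\mu$ and using $(A_j g,\Phi_\nu)=(g,A_{-j}\Phi_\nu)=\sqrt{2\nu_j}(g,\Phi_{\nu-e_j})$, one reindexes $\mu=\nu-e_j$ to match $A_j P_{k-1} g$. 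These operators also commute with $\partial_\rho$ and with $\mathcal F_\rho$.

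With these in hand, I would apply $A_j$ to \eqref{FCofAH} and shift the summation index $k\mapsto k-1$ (noting that the $k=0$ term produces $P_{-1}=0$ and hence can be included without penalty), turning the multiplier $(\tau^2+2k+d)^\alpha$ into $(\tau^2+2(k-1)+d)^\alpha=(\tau^2+2k+d-2)^\alpha$, which is exactly the spectral symbol of $(\AH-2)^\alpha$ applied to $A_j f$. This gives $A_j\AH^\alpha f=(\AH-2)^\alpha A_j f$. The three remaining identities are proved in the same fashion, shifting the index in the opposite direction for $A_{-j}$ and using the dual identities $P_k A_j=A_j P_{k-1}$ and $P_k A_{-j}=A_{-j}P_{k+1}$ for the two equalities with $\AH^\alpha$ on the outer left.

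The main delicate point is ensuring $(\AH-2)^\alpha$ is meaningful. Since $\AH\ge d$ on its domain, the spectrum of $\AH-2$ lies in $[d-2,\infty)$, and the hypothesis $d\ge 3$ gives $d-2\ge 1>0$, so $(\AH-2)^\alpha$ is well-defined via \eqref{FCofAH} (equivalently by \eqref{kernel of H+a} with $a=-2$ when $\alpha<0$). Convergence of all spectral series is not an issue because $A_{\pm j}$ maps $C_0^\infty(\mathbb R^{d+1})$ into itself, and the Hermite coefficients of such functions decay faster than any polynomial in $k$ uniformly in $\tau$, which justifies the interchange of $A_{\pm j}$ with the sum and the Fourier integral.
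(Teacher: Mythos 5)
Your proof is correct and follows essentially the same route as the paper's: both rest on the ladder relations \eqref{derivative of Hermite function} together with $A_j^*=A_{-j}$, and you merely package the key step as the intertwining identities $A_jP_k=P_{k+1}A_j$, $A_{-j}P_k=P_{k-1}A_{-j}$, whereas the paper carries out the identical index shift directly on the Hermite coefficients $(\mathcal F_\rho f(\tau,\cdot),\Phi_\mu)$. Your added remarks on why $d\ge 3$ is needed for $(\AH-2)^\alpha$ and on the convergence of the spectral series are sound and slightly more explicit than the paper's treatment.
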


\begin{proof} The first result  is trivial. For $1\leq j\leq d$, we only give the details for $ A_j \AH^{\alpha}f =( \AH-2)^{\alpha} A_j f$, as the other cases are dealt with in analoge argument.
 By the definition of $\AH^{\alpha}$, we have 
\begin{align*}
    \AH^{\alpha} f(\rho, x) =\sum_{\mu}\frac{1}{\sqrt{2\pi}} \int_{\mathbb R} \ee^{i\tau \rho}(\tau^2+2|\mu|+d)^\alpha (\mathcal F_{\rho} f(\tau, \cdot), \Phi_{\mu}(\cdot)) \Phi_{\mu} (x)\dd \tau.
\end{align*}
On one hand,  by  \eqref{derivative of Hermite function}, we have
\begin{align*}
    &A_j  \AH^{\alpha} f(\rho, x)\\
    &=\sum_{\mu} \frac{1}{\sqrt{2\pi}}\int_{\mathbb R} \ee^{i\tau \rho}(\tau^2+2|\mu|+d)^\alpha  (\mathcal F_{\rho} f(\tau, \cdot), \Phi_{\mu}(\cdot ))A_j \Phi_{\mu} (x)\dd \tau\\
    &=\sum_{\mu}\frac{1}{\sqrt{2\pi}} \int_{\mathbb R} \ee^{i\tau \rho}(\tau^2+2|\mu|+d)^\alpha  (\mathcal F_{\rho} f(\tau, \cdot), \Phi_{\mu}(\cdot))\dd \tau\cdot  \sqrt{2(\mu_j+1)}\Phi_{\mu+e_j}(x).
\end{align*}
On the other hand, by the fact that   $A_{-j}=A_j^*$, we obtain
\begin{align*}
A_jf(\rho, x)&=\sum_{\mu}\frac{1}{\sqrt{2\pi}} \int_{\mathbb R} \ee^{i\tau \rho}(A_j\mathcal F_{\rho} f(\tau, \cdot), \Phi_{\mu}(\cdot)) \Phi_{\mu} (x)\dd \tau\\
&=\sum_{\mu}\frac{1}{\sqrt{2\pi}} \int_{\mathbb R} \ee^{i\tau \rho}(\mathcal F_{\rho} f(\tau, \cdot), A_{-j}\Phi_{\mu}(\cdot)) \Phi_{\mu} (x)\dd \tau\\
&=\sum_{\mu}\frac{1}{\sqrt{2\pi}} \int_{\mathbb R} \ee^{i\tau \rho}(\mathcal F_{\rho} f(\tau, \cdot),  \Phi_{\mu-e_j}(\cdot))\dd \tau \cdot \sqrt{2\mu_j} \Phi_{\mu} (x).
\end{align*}
By \eqref{FCofAH},  we have for a function $g\in C_0^\infty(\mathbb R^{d+1})$ that
\begin{align*}
   ( \AH-2)^{\alpha} g=\sum_{\mu} \frac{1}{\sqrt{2\pi}}\int_{\mathbb R} \ee^{i\tau \rho}(\tau^2+2|\mu|+d-2)^\alpha  (\mathcal F_\rho g(\tau, \cdot), \Phi_{\mu}(\cdot)) \Phi_{\mu} (x)\dd \tau, 
\end{align*}
which implies by choosing $g=A_jf$ that
\begin{align*}
 & ( \AH-2)^{\alpha}  A_jf(\rho, x)\\
 &=\sum_{\mu} \frac{1}{\sqrt{2\pi}}\int_{\mathbb R} \ee^{i\tau \rho}(\tau^2+2|\mu|+d-2)^\alpha  (\mathcal F_{\rho} f(\tau, \cdot),  \Phi_{\mu-e_j}(\cdot))\dd \tau\sqrt{2\mu_j} \Phi_{\mu} (x)\\
 &=\sum_{\mu}\frac{1}{\sqrt{2\pi}} \int_{\mathbb R} \ee^{i\tau \rho}(\tau^2+2|\mu|+d)^\alpha (\mathcal F_{\rho} f(\tau, \cdot),  \Phi_{\mu}(\cdot ))\dd \tau\sqrt{2\mu_j+2} \Phi_{\mu+e_j} (x)\\
 &= A_j  \AH^{\alpha}f(\rho, x).
 \end{align*}
 
We finish the proof.
\end{proof}

\subsection{Properties of negative fractional powers of $\AH$}
In this subsection, we explore the \ properties of the negative powers of the operator $\AH$. The first result is:
\begin{prop} \label{prop31} Given $\alpha>0$,   the operator $ \AH^{-\alpha}$ has the integral representation
\begin{align*}
     \AH^{-\alpha} f(z)=\int_{\mathbb R^{d+1}}{K_{\alpha}}(z,z') f(z')\dd z' \end{align*}
for all $f \in C_0^\infty(\mathbb R^{d+1})$. Moreover, there exist a functions $\Psi_\alpha \in L^1(\mathbb R^{d+1})$ and a constant $C>0$ such that
\begin{align}
 {K_{\alpha}}(z, z')  \leq C  \Psi_\alpha(z-z'), \ \text{for all}\  z, z' \in \mathbb R^{d+1}. \label{kernel-ineq}
\end{align}
Hence, $\AH^{-\alpha}$ is well defined  and bounded on $L^p(\mathbb R^{d+1})$ for $p \in [1, +\infty]$.
\end{prop}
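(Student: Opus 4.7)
The plan is to start from the semigroup representation \eqref{negative power and semigroup} and the explicit heat kernel \eqref{kernel of semigroup}, and produce a bound by $\Psi_\alpha(z-z')$ by dropping the $|x+x'|^2$ contribution in $B(t,z,z')$. Since all quantities in \eqref{negative power and semigroup} and \eqref{kernel of semigroup} are nonnegative and $f\in C_0^\infty$, Fubini justifies interchanging the $t$-integral and the $z'$-integral, yielding
\begin{align*}
\AH^{-\alpha} f(z) = \int_{\mathbb R^{d+1}} K_\alpha(z,z')\, f(z')\dd z', \qquad K_\alpha(z,z') = \frac{1}{\Gamma(\alpha)}\int_0^\infty t^{\alpha-1} E(t,z,z')\dd t.
\end{align*}

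Next I would simplify the factor $2\coth(2t)-\tanh t$ appearing in $B$. A direct computation using $\coth(2t)=(1+2\sinh^2 t)/(2\sinh t\cosh t)$ gives the identity $2\coth(2t)-\tanh t=\coth t$, so
\begin{align*}
B(t,z,z')=\frac{\coth t}{4}|x-x'|^2+\frac{\tanh t}{4}|x+x'|^2+\frac{(\rho-\rho')^2}{4t}.
\end{align*}
Since the middle term is nonnegative, discarding the factor $\ee^{-\tanh(t)|x+x'|^2/4}\le 1$ yields a pointwise bound $K_\alpha(z,z')\le C\,\Psi_\alpha(z-z')$, where
\begin{align*}
\Psi_\alpha(\rho,x) \coloneq \int_0^\infty t^{\alpha-1}\, t^{-1/2}(\sinh 2t)^{-d/2} \exp\!\left(-\frac{\coth t}{4}|x|^2-\frac{\rho^2}{4t}\right)\dd t.
\end{align*}

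The main step is then to show $\Psi_\alpha\in L^1(\mathbb R^{d+1})$. Using Fubini and the explicit Gaussian integrals
\begin{align*}
\int_{\mathbb R}\ee^{-\rho^2/(4t)}\dd\rho = 2\sqrt{\pi t},\qquad \int_{\mathbb R^d}\ee^{-\coth(t)|x|^2/4}\dd x = (4\pi\tanh t)^{d/2},
\end{align*}
together with the simplification $\tanh t/\sinh(2t) = (2\cosh^2 t)^{-1} = \tfrac12 \sech^2 t$, I obtain
\begin{align*}
\|\Psi_\alpha\|_{L^1(\mathbb R^{d+1})} \lesssim \int_0^\infty t^{\alpha-1}\sech^d t\dd t.
\end{align*}
This integral is finite because $\sech t\to1$ as $t\to 0^+$, making the integrand $\sim t^{\alpha-1}$ which is integrable for $\alpha>0$, while $\sech t\sim 2\ee^{-t}$ as $t\to\infty$, giving exponential decay $\ee^{-dt}$.

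Finally I would conclude boundedness of $\AH^{-\alpha}$ on $L^p(\mathbb R^{d+1})$ for every $p\in[1,\infty]$ via Young's convolution inequality: $|\AH^{-\alpha}f(z)|\le C(\Psi_\alpha\ast|f|)(z)$ with $\Psi_\alpha\in L^1$ implies $\|\AH^{-\alpha}f\|_{L^p}\le C\|\Psi_\alpha\|_{L^1}\|f\|_{L^p}$, and by density the operator extends from $C_0^\infty$ to all of $L^p$ (the $p=\infty$ endpoint being immediate from the pointwise bound). The main obstacle is the bookkeeping at small $t$: both $t^{-1/2}(\sinh 2t)^{-d/2}$ and $\coth t$ blow up as $t\to 0^+$, and only the precise cancellation encoded in the identity $\tanh t/\sinh 2t = \tfrac12\sech^2 t$ produces a kernel whose singularity is mild enough to remain integrable in $z$ while keeping the $t$-integral convergent for every $\alpha>0$.
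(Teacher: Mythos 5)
Your argument is correct, but it takes a genuinely different route from the paper's. Both proofs start from \eqref{negative power and semigroup} and \eqref{kernel of semigroup} and discard the $|x+x'|^2$ term; the divergence is in how integrability of the majorant is established. The paper splits the $t$-integral at $t=1$ and further splits into the regimes $|z-z'|\ge 1$ and $|z-z'|<1$, extracting explicit pointwise asymptotics of the kernel (a power, logarithmic, or bounded singularity on the diagonal plus Gaussian decay off it), which yields the concrete majorant \eqref{control function}. You instead keep the majorant in implicit form, use the exact identity $2\coth 2t-\tanh t=\coth t$ (the paper only uses the one-sided bound $2\coth 2t - \tanh t > \coth 2t$), and integrate out $z-z'$ first via Gaussian integrals, so that the whole $L^1$ bound collapses to the single convergent integral $\int_0^\infty t^{\alpha-1}\sech^d t\,\dd t$; your computations ($\tanh t/\sinh 2t=\tfrac12\sech^2 t$, the Gaussian normalizations, and the convergence at both endpoints for every $\alpha>0$) all check out, and the Fubini/Young steps are standard. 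Your approach is shorter and cleaner for the proposition as stated. What it does not deliver is the explicit profile of $\Psi_\alpha$ near the diagonal, in particular the bound $K_{\alpha}(z,z')\lesssim |z-z'|^{2\alpha-(d+1)}$ for $|z-z'|<1$, which the paper relies on later (e.g.\ for \eqref{HLS1} and the Hardy--Littlewood--Sobolev estimates in Section \ref{s:5-fun-ineqs}); to recover those you would still need the paper's case analysis. One minor presentational point: you should establish the majorization and the finiteness of $\|\Psi_\alpha\|_{L^1}$ \emph{before} invoking Fubini to justify the integral representation, since the interchange for general (sign-changing) $f\in C_0^\infty$ is licensed precisely by that absolute convergence.
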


\begin{proof} By \eqref{kernel of semigroup} and \eqref{negative power and semigroup}, we have
\begin{align*} K_{\alpha}(\rho, x, \rho' ,x') =  \frac{C_d}{\Gamma(\alpha)} \int_0^\infty t^{\alpha-1-1/2} (\sinh 2t)^{-d/2} \ee^{-B(t, z, z')}  \dd t.
\end{align*}
We decompose ${K_{\alpha}}(\rho, x, \rho' ,x')$ into two parts,
\begin{align*}
    K_{\alpha}^1(\rho, x, \rho' ,x')&=\frac{1}{\Gamma(\alpha)}\int_0^1  t^{\alpha-1} E(t,\rho, x, \rho' ,x') \dd t, \\
    K_{\alpha}^2(\rho,x, \rho', x')&=\frac{1}{\Gamma(\alpha)}\int_1^\infty  t^{\alpha-1}E(t,\rho,x,\rho',x') \dd t.
\end{align*}

We firstly estimate the term $ K_{\alpha}^2(\rho,x, \rho', x')$. Together   the  inequality that 
  $
  2\coth 2t -\tanh t >\coth 2t > 1
  $,  with the fact that $$\tanh t \sim 1, \; \sinh 2t \sim \ee^{2t },\; \coth 2t \sim \ee^{2t},\;\text{as}\;  t \to \infty,$$ 
 we have
  \begin{align*}
      |K_{\alpha}^2(\rho,x, \rho', x')|&\leq C \ee^{-|x-x'|^2-(\rho-\rho')^2-|x+x'|^2} \int_1^\infty t^{\alpha-3/2}\ee^{-td}\dd t\\
      &\leq C \ee^{-|x-x'|^2-(\rho-\rho')^2-|x+x'|^2}.
  \end{align*}

We next estimate $K_{\alpha}^1(\rho, x, \rho' ,x')$. We further split it into two cases.

  \emph{Case 1:} $(z, z') \in D_{+}\coloneq\{(z, z') \in \mathbb R^{d+1} \times \mathbb R^{d+1}; \; |z-z'|\ge1\}$. Using the fact that $\sinh 2t \sim 2t$, $\coth 2t \sim \frac{1}{2t}$ and $\tanh t\sim t$ as $t\to 0$, we have
  \begin{align*}
    |K_{\alpha}^1(\rho, x, \rho' ,x')|&\lesssim  \int_0^1 t^{\alpha-1-\frac{d+1}{2}} \ee^{-\frac{1}{8t}[|x-x'|^2+(\rho-\rho')^2]} \dd t\\
    &\lesssim \ee^{-\frac{1}{16}[|x-x'|^2+(\rho-\rho')^2]} \int_{0}^1 t^{\alpha-1-\frac{d+1}{2}} \ee^{-\frac{1}{16 t}} \dd t\\
  & \lesssim \ee^{-\frac{1}{16}[|x-x'|^2+(\rho-\rho')^2]}.
  \end{align*}

\emph{Case 2:} $(z, z') \in D_{-}\coloneq \{(z, z') \in \mathbb R^{d+1} \times \mathbb R^{d+1}; \; |z-z'|<1\}$. If  $\alpha<\frac{d+1}{2}$, we  have
\begin{align*}
     |K_{\alpha}^1(\rho, x, \rho' ,x')|&\leq  \int_0^1 t^{\alpha-1-\frac{d+1}{2}} \ee^{-\frac{1}{8t}[|x-x'|^2+(\rho-\rho')^2]} \dd t\\
     &\lesssim \frac{1}{[|x-x'|^2+(\rho-\rho')^2]^{(d+1)/2-\alpha}};
\end{align*}
if $\alpha=\frac{d+1}{2}$, we have
\begin{align*}
      |K_{\alpha}^1(\rho, ,x,  \rho', x')| \lesssim  \log[|x-x'|^2+(\rho-\rho')^2];
\end{align*}
and finally, if $\alpha>\frac{d+1}{2}$, we have
\begin{align*}
    |K_{\alpha}^1(\rho, x, \rho' ,x')|\lesssim 1.
\end{align*}

It follows that  \eqref{kernel-ineq} holds with the integrable function $\Psi_\alpha$ defined by
\begin{align}\label{control function}
    \Psi_\alpha(z-z')=\begin{cases}
       \mathbf 1_{D_-}|z-z'|^{2\alpha-(d+1)}+ \mathbf 1_{D_+}\ee^{-\frac{1}{16}|z-z'|^2}, & \alpha<\frac{d+1}{2},\\
       \mathbf 1_{D_-} \log|z-z'|+ \mathbf 1_{D_+}\ee^{-\frac{1}{16}|z-z'|^2}, & \alpha=\frac{d+1}{2},\\
        \mathbf 1_{D_-} + \mathbf 1_{D_+}\ee^{-\frac{1}{16}|z-z'|^2}, & \alpha>\frac{d+1}{2},
    \end{cases}
\end{align}
which completes the proof.
\end{proof}

By \eqref{kernel of H+a},  we can obtain  similar estimates for the  integral  kernels of the operators $(\AH+2)^{-\alpha}$ and $(\AH-2)^{-\alpha}$:
\begin{cor}\label{kernel of H+-2} Assume that $\alpha>0$. Let $M_\alpha (\rho, x, \rho' ,x')$ and $N_\alpha (\rho, x, \rho' ,x')$ be the integral kernels of the operators $(\AH+2)^{-\alpha}$ and $(\AH-2)^{-\alpha}$ respectively. Then, we have
\begin{align*}
    M_\alpha (\rho, x, \rho' ,x') \leq  {K_{\alpha}}(\rho, x, \rho' ,x')\leq   \Psi_\alpha(z-z'),
\end{align*}
and
\begin{align*}
   N_\alpha (\rho, x, \rho' ,x') \leq   \Psi_\alpha(z-z'), \quad  \text{if}\;\;  d\geq  3.
\end{align*}
\end{cor}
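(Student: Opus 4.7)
\medskip
\noindent\textbf{Proof proposal.} The plan is to use the heat-semigroup representation \eqref{kernel of H+a} with $a=\pm2$ to write
\[
 M_\alpha(z,z') = \frac{1}{\Gamma(\alpha)}\int_0^\infty t^{\alpha-1} \ee^{-2t} E(t,z,z') \dd t, \qquad N_\alpha(z,z') = \frac{1}{\Gamma(\alpha)}\int_0^\infty t^{\alpha-1} \ee^{2t} E(t,z,z') \dd t,
\]
and then compare each against the formula for $K_\alpha$ used in Proposition \ref{prop31}. The shift by $+2$ is harmless because it only contributes a bounded multiplicative factor $\ee^{-2t}\le1$, while the shift by $-2$ costs an $\ee^{2t}$ factor which must be absorbed into the exponential decay of $E(t,z,z')$ for large $t$; this is where the dimensional restriction $d\ge3$ enters.

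For the first bound, since $\ee^{-2t}\le 1$ and $E(t,z,z')>0$, I would immediately read off $M_\alpha(z,z')\le K_\alpha(z,z')$ from the defining integrals. Combined with \eqref{kernel-ineq} from Proposition \ref{prop31}, this yields $M_\alpha(z,z')\le K_\alpha(z,z')\le C\Psi_\alpha(z-z')$ with no further work.

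For $N_\alpha$, I would split the $t$-integral into $[0,1]$ and $[1,\infty)$ exactly as in the proof of Proposition \ref{prop31}. On $[0,1]$ the factor $\ee^{2t}$ is bounded by $\ee^2$, so the estimates in \emph{Case 1} and \emph{Case 2} of that proof carry over verbatim and produce the same $\Psi_\alpha$-type contributions (either the power singularity $|z-z'|^{2\alpha-(d+1)}$, the logarithm, or a bounded term, depending on $\alpha$). On $[1,\infty)$, I would use the asymptotics $\sinh 2t\sim \ee^{2t}$, $\coth 2t\sim 1$, $\tanh t\sim 1$ from the proof of Proposition \ref{prop31} to obtain
\[
 E(t,z,z')\lesssim \ee^{-dt}\ee^{-c(|x-x'|^2+(\rho-\rho')^2+|x+x'|^2)},
\]
so that the tail of $N_\alpha$ is controlled by $\ee^{-c|z-z'|^2}\int_1^\infty t^{\alpha-3/2}\ee^{(2-d)t}\dd t$.

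The main (indeed only) obstacle is making sure this last integral converges; it does precisely when $2-d<0$, i.e.\ $d\ge3$, which is exactly the hypothesis. Once convergence is secured, the tail piece is again bounded by $\ee^{-c|z-z'|^2}$, and combining with the near-zero contribution gives $N_\alpha(z,z')\lesssim \Psi_\alpha(z-z')$ with the same majorant \eqref{control function} used in Proposition \ref{prop31}. No new ideas beyond the proof of that proposition are required; the corollary is essentially a bookkeeping exercise in how the shift $\ee^{\mp 2t}$ interacts with the exponential factor $(\sinh 2t)^{-d/2}\sim \ee^{-dt}$ at infinity.
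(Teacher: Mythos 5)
Your proposal is correct and follows essentially the same route as the paper: $M_\alpha\le K_\alpha$ comes for free from $\ee^{-2t}\le1$, and for $N_\alpha$ the paper likewise splits the $t$-integral at $t=1$, reuses the $K^1_\alpha$ estimates on $[0,1]$ (where $\ee^{2t}$ is bounded), and absorbs $\ee^{2t}$ into the $\ee^{-dt}$ decay on $[1,\infty)$, which is exactly where $d\ge3$ is needed. No substantive difference.
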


\begin{proof} By \eqref{kernel of H+a}, the  estimate of $M_\alpha $  is trivial. It suffices to show the estimate of $N_\alpha$. By definition, we have
\begin{align}
     N_\alpha (\rho, x, \rho' ,x')=   \frac{1}{\Gamma(\alpha)}\int_0^\infty  t^{\alpha-1} e^{2t} E(t,\rho, x, \rho' ,x') \dd t.
\end{align}
We once again decompose $N_\alpha (\rho, x, \rho' ,x')$ into two parts as follows.
 \begin{align*}
    N_{\alpha}^1(\rho, x, \rho' ,x')&=\frac{1}{\Gamma(\alpha)}\int_0^1  t^{\alpha-1} e^{2t}E(t,\rho, x, \rho' ,x') \dd t, \\
    N_{\alpha}^2(\rho,x, \rho', x')&=\frac{1}{\Gamma(\alpha)}\int_1^\infty  t^{\alpha-1}e^{2t}E(t,\rho,x,\rho',x') \dd t.
\end{align*}

For the term $ N_{\alpha}^1(\rho, x, \rho' ,x')$,  we can proceed by the same way as the estimate of $K_{\alpha}^1(\rho, x, \rho' ,x')$, we omit the details here. For  $N_{\alpha}^1(\rho, x, \rho' ,x')$ term, we obtain for  $d\geq 3$ that
 \begin{align*}
      |N_{\alpha}^2(\rho,x, \rho', x')|&\leq C \ee^{-|x-x'|^2-(\rho-\rho')^2-|x+x'|^2} \int_1^\infty t^{\alpha-3/2}\ee^{-td}e^{2t}\dd t\\
      &\leq C \ee^{-|x-x'|^2-(\rho-\rho')^2-|x+x'|^2},
  \end{align*}
which completes the proof. \end{proof}

\begin{rem}We make some comments about the condition that $d > - a=2$ for the second result of $N_{\alpha}^2(\rho,x, \rho', x')$ in the above corollary. Since the integral $\int_1^\infty t^{\alpha-3/2}e^{2t}e^{-t} \dd t$ is divergent, we cannot obtain similar estimates for $(\AH-2)^{-\alpha}$ for the case $d=1$. It is consistent with the spectral property that $\sigma(\AH-2)=[-1, \infty)$ when $d=1$.  When $d=2$, the spectral property is  that $\sigma(\AH-2)=[0, \infty)$, and we have different behavior depending on the power $\alpha$. If $0<\alpha<1/2$,   the kernel of $(\AH-2)^{-\alpha}$ has  exponential decay as $|z-z'| \to \infty$ since $\int_1^\infty t^{\alpha-3/2}e^{2t}e^{-2t} \dd t$ is convergent, and if  $\alpha\geq 1/2$, no useful result can be derived.
\end{rem}

In addition, we also have the following  boundedness result for the negative fractional powers of the operator $\AH$.
 \begin{prop}\label{decay in x}
  Let $p\in [1, \infty]$ and $\alpha>0$. Then the weighted operator
  \begin{align*}
      |x|^{2\alpha} \AH^{-\alpha}
  \end{align*}
is bounded on $L^p(\mathbb R^{d+1})$.
\end{prop}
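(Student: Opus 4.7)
The plan is to apply the Schur test to the non-negative integral kernel of $|x|^{2\alpha}\AH^{-\alpha}$, namely $|x|^{2\alpha}K_\alpha(z,z')$ (non-negativity of $K_\alpha$ follows from Proposition \ref{prop31}). It suffices to establish the two supremum bounds
\[
 \sup_{z}\int_{\mathbb R^{d+1}} |x|^{2\alpha}K_\alpha(z,z')\dd z' <\infty \quad\text{and}\quad \sup_{z'}\int_{\mathbb R^{d+1}} |x|^{2\alpha}K_\alpha(z,z')\dd z<\infty,
\]
which yield the $L^1\to L^1$ and $L^\infty\to L^\infty$ bounds respectively; interpolation then gives boundedness on $L^p$ for every $p\in[1,\infty]$.

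For the first supremum, I would recognize that $\int K_\alpha(z,z')\dd z'=\AH^{-\alpha}\mathbf{1}(z)$ and evaluate this via the heat-kernel representation \eqref{negative power and semigroup} together with \eqref{kernel of semigroup}. The $\rho'$-integral is a straightforward Gaussian producing $2\sqrt{\pi t}$. The $x'$-integral reduces to a Gaussian in $x'$ after writing $\tilde A=(2\coth 2t-\tanh t)/4$, $\tilde B=\tanh t/4$ and completing the square; the clean identities $\tilde A+\tilde B=\tfrac12\coth 2t$ and $\tfrac{4\tilde A\tilde B}{\tilde A+\tilde B}=\tfrac12\tanh 2t$, derivable via standard hyperbolic trigonometry (e.g.\ $2\coth 2t=\coth t+\tanh t$), collapse everything into
\[
 \AH^{-\alpha}\mathbf 1(z)=\frac{C_d}{\Gamma(\alpha)}\int_0^\infty t^{\alpha-1}(\cosh 2t)^{-d/2}\ee^{-\tanh 2t\,|x|^2/2}\dd t.
\]
Multiplying by $|x|^{2\alpha}$: for $t\leq 1$, $\tanh 2t\sim 2t$ so the substitution $u=t|x|^2$ gives $|x|^{2\alpha}\int_0^1 t^{\alpha-1}\ee^{-t|x|^2}\dd t\le\Gamma(\alpha)$ uniformly in $x$; for $t\geq 1$, both $(\cosh 2t)^{-d/2}\sim \ee^{-dt}$ and $\ee^{-|x|^2/2}$ provide more than enough decay to absorb $|x|^{2\alpha}$.

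For the second supremum, I would complete the square directly in $x$ inside $K_\alpha(z,z')$: with $y=x-cx'$ and $c=(\tilde A-\tilde B)/(\tilde A+\tilde B)$, the same algebra yields $c=1/\cosh 2t$. Writing $|x|^{2\alpha}=|y+cx'|^{2\alpha}\lesssim |y|^{2\alpha}+|c|^{2\alpha}|x'|^{2\alpha}$, the $y$-integration produces two $t$-integrals: one with extra factor $(\sinh 2t)^\alpha(\cosh 2t)^{-\alpha}$ (from the $|y|^{2\alpha}$ piece) and one weighted by $(\cosh 2t)^{-2\alpha}|x'|^{2\alpha}$, each multiplied by $\ee^{-\tanh 2t\,|x'|^2/2}$. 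Both are bounded uniformly in $x'$ by the same small-$t$/large-$t$ analysis as above, now applied to the variable $|x'|$; note the factor $(\cosh 2t)^{-2\alpha}$ in the second piece is exactly what keeps the weight $|x'|^{2\alpha}$ absorbable.

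The main obstacle will be the algebraic bookkeeping around completing the square in the mixed $|x-x'|^2,|x+x'|^2$ quadratic form. The clean identities $\tilde A+\tilde B=\tfrac12\coth 2t$, $c=1/\cosh 2t$, and $\tfrac{4\tilde A\tilde B}{\tilde A+\tilde B}=\tfrac12\tanh 2t$ are what ensure the resulting Gaussian in $x'$ has coefficient $\tanh 2t$, whose behavior (linear in $t$ near zero, approaching $1$ at infinity) is precisely right to absorb the weight $|x'|^{2\alpha}$ uniformly; without these simplifications the bookkeeping becomes intractable.
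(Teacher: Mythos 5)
Your argument is correct, and it takes a genuinely different route from the paper's. The paper also runs a Schur test on $|x|^{2\alpha}K_\alpha(z,z')$, but it never integrates the Gaussian exactly: instead it partitions $\mathbb R^d_{x'}$ into $\{|x|>2|x-x'|\}$ and its complement, absorbing the weight via $|x|^{2\alpha}\lesssim|x-x'|^{2\alpha}$ against the factor $\ee^{-|x-x'|^2/16}$ on one piece and via $|x|<|x+x'|$ against the factor $\ee^{-\tanh t\,|x+x'|^2/4}$ on the other, with a further splitting of $K_\alpha$ into its $t\le1$ and $t\ge1$ parts and a change of variables for the small-$t$ piece; the second Schur bound is then reduced to the first by the symmetry $K_\alpha(z,z')=K_\alpha(z',z)$ plus another case analysis. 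You instead compute $\int E(t,z,z')\dd z'$ in closed form by completing the square, and your key identities check out: with $\tilde A=(2\coth2t-\tanh t)/4$ and $\tilde B=(\tanh t)/4$ one indeed has $\tilde A+\tilde B=\tfrac12\coth2t$, $4\tilde A\tilde B=\tfrac14$ (via $2\coth2t=\coth t+\tanh t$), hence $4\tilde A\tilde B/(\tilde A+\tilde B)=\tfrac12\tanh2t$ and $c=(\tilde A-\tilde B)/(\tilde A+\tilde B)=\sech2t$. This collapses the first Schur sum to $\int_0^\infty t^{\alpha-1}(\cosh2t)^{-d/2}\ee^{-\tanh2t\,|x|^2/2}\dd t$ — consistent, as a check, with the symbol $p_t(\rho,x,0,0)$ in \eqref{form of b} — and the uniform absorption of $|x|^{2\alpha}$ by the substitution $u=t|x|^2$ for $t\le1$ and by $\ee^{-c|x|^2-dt}$ for $t\ge1$ is sound; the same goes for your treatment of the second supremum via $|y+cx'|^{2\alpha}\lesssim|y|^{2\alpha}+c^{2\alpha}|x'|^{2\alpha}$. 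What your approach buys is the elimination of all spatial case analysis in favor of explicit one-dimensional $t$-integrals; what it costs is the hyperbolic-trigonometric bookkeeping you flag, which the paper's cruder pointwise inequalities avoid. Two cosmetic points: you have the roles of the two Schur suprema swapped (the $\sup_z\int\dd z'$ bound gives $L^\infty\to L^\infty$, the $\sup_{z'}\int\dd z$ bound gives $L^1\to L^1$), and you should say a word about why Tonelli justifies exchanging the $t$- and $z'$-integrations (positivity of $E$, as noted after \eqref{negative power and semigroup}, suffices). Neither affects correctness.
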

\begin{proof} By Schur's lemma \cite{Gra250}, we only need to verify that
\begin{align}
    \sup_{z} |x|^{2\alpha} \int_{\mathbb R^{d+1}} |{K_{\alpha}}(z, z')| \dd z' \lesssim 1,  \label{decay in x-1}
    \end{align}
    and
    \begin{align}
     \sup_{z'} \int_{\mathbb R^{d+1}}  |x|^{2\alpha} |{K_{\alpha}}(z, z')| \dd z \lesssim 1.\label{decay in x-2}
\end{align}

By the boundedness in Proposition \ref{prop31}, we may  assume $|x|\geq 2$.

Firstly, we prove \eqref{decay in x-1}. We partition $\mathbb R^{d}=E_x \cup E_x^c$, where
\begin{align*}
    E_x=\{x'\in \mathbb R^{d}: |x|>2|x-x'|\}.
\end{align*}

     When $x'\in E^c_x$, i.e., $|x|\le 2|x-x'|$.  By the fact that
\[
         2\coth2t-\tanh t \geq \frac{1}{2}(2\coth2t-\tanh t)+\frac{1}{4},
\]
we have
\begin{align*}
  \quad  K_\alpha(\rho, x,  \rho', x')
   & \lesssim \ee^{-\frac{1}{16}|x-x'|^2}\int_0^\infty t^{\alpha-1-1/2} (\sinh 2t)^{-d/2} \ee^{-\frac{1}{2}(B(t, \rho, \frac{x}{\sqrt 2}, \rho',  \frac{x'}{\sqrt 2} )}  \dd t\\
   & \lesssim \ee^{-\frac{1}{16}|x-x'|^2} {K_{\alpha}}\left(\rho, \frac{x}{\sqrt 2}, \rho', \frac{x'}{\sqrt 2} \right).
\end{align*}
So it follows that
\begin{align*}
  & \quad |x|^{2\alpha}  \int_\mathbb R \int_{E_x^c} |{K_{\alpha}}(\rho, x,  \rho', x') |\dd x' \dd \rho'\\
  &\lesssim \int_{\mathbb R^{d+1}}\underbrace{ |x-x'|^{2\alpha} \ee^{-\frac{1}{16}|x-x'|^2}}_{<\; \infty}\Big|{K_{\alpha}}\Big(\rho, \frac{x}{\sqrt 2}, \rho', \frac{x'}{\sqrt 2} \Big) \Big|\dd x' \dd \rho'\\
  &\lesssim 1.
\end{align*}

When $x' \in E_x$, i.e., $|x|> 2|x-x'|$, we once again decompose  $K_\alpha=K^1_\alpha+K^2_\alpha$ as in the proof of Proposition \ref{prop31}. For $K^2_{\alpha}(z, z')$ term. Since $|x|>2|x-x'|$ implies  $|x|<|x+x'|$, we have
\begin{align*}
    &\quad |x|^{2\alpha}\int_{\mathbb R} \int_{E_x} |x|^{2\alpha}\ee^{-\frac{1}{8}|x+x'|^2} \ee^{-\frac{1}{4}|x-x'|^2} \ee^{-(\rho-\rho')^2} \dd x'\dd \rho'\\
    &\leq \int_{\mathbb R} \int_{\mathbb R^d} \underbrace{|x|^{2\alpha}\ee^{-\frac{1}{8}|x|^2}}_{\lesssim\; 1} \ee^{-\frac{1}{4}|x-x'|^2} \ee^{-(\rho-\rho')^2} \dd x'\dd \rho'\\
    &\lesssim 1.
\end{align*}
As for the $K_\alpha^1 (z, z')$ term,  we have

\begin{align*}
   &\int_{\mathbb R} \int_{E_x} |x|^{2\alpha} \int_0^1 t^{\alpha-\frac{d+3}{2}} \ee^{-\frac{1}{8 t}|x-x'|^2-\frac{1}{4}t|x|^2 -\frac{(\rho-\rho')^2}{4t}} \dd t \dd x' \dd \rho'\\
   &\leq |x| \int_{\mathbb R} \int_{0}^{|x|^2} \int_0^{|x|^2}  u^{\alpha-\frac{d+3}{2}} \ee^{-\frac{|x|^2}{8 u}(\rho-\rho')^2} \ee^{-\frac{r^2}{4 u} +\frac{1}{4}u} \dd u  r^d \dd r\dd \rho'\\
   &\leq |x| \int_{0}^{|x|^2} \int_0^{|x|^2}  u^{\alpha-\frac{d+3}{2}}\underbrace{ \int_{\mathbb R} \ee^{-\frac{|x|^2}{8 u}(\rho-\rho')^2} \dd \rho'}_{\lesssim\; u^{1/2}|x|^{-1}}\ee^{-\frac{r^2}{4 u} +\frac{1}{4}u} \dd u  r^d \dd r\\
   &\lesssim  \int_{0}^{|x|^2} \int_0^{|x|^2}  u^{\alpha-\frac{d+2}{2}}  \ee^{-\frac{r^2}{4 u} +\frac{1}{4}u} \dd u \,  r^d \dd r <\infty.
\end{align*}
This completes the proof of \eqref{decay in x-1}.

 To prove \eqref{decay in x-2}, we similarly partition $\mathbb R^{d}=E_{x'}\cup E_{x'}^c$, with
\begin{align*}
    E_{x'}=\{x\in \mathbb R^d: |x'|\geq 2|x-x'|\},
\end{align*}
and write
\begin{align*}
 \int_{\mathbb R^{d+1}} |x|^{2\alpha}{K_{\alpha}}(z, z') \dd z=\bigg(\int_{\mathbb R} \int_{E_{x'}}+\int_{\mathbb R} \int_{E_{x'}^c}\bigg)|x|^{2\alpha}{K_{\alpha}}(\rho, x,  \rho', x') \dd x \dd \rho.
\end{align*}

   When $x\in E_{x'}$, i.e., $|x'| \geq 2|x-x'|$, we have $|x| \leq \frac{3}{2}|x'|$. Since the kernel is symmetric in $z$ and $z'$, that is, ${K_{\alpha}}(\rho, x,  \rho', x')={K_{\alpha}}(\rho', x', \rho, x)$,  we have
 \begin{align*}
  \int_{\mathbb R} \int_{E_{x'}}|x|^{2\alpha}{K_{\alpha}}(\rho, x,  \rho', x') \dd x \dd \rho
 \lesssim   |x'|^{2\alpha} \int_{\mathbb R}  \int_{|x'|>2|x-x'|} |{K_{\alpha}}(\rho', x', \rho, x) |\dd x \dd \rho.
 \end{align*}
By \eqref{decay in x-1},  the right hand side of the above inequality is finite.  

When $x\in E^c_{x'}$, we further split the integral domain into two cases. When $x\in E^c_{x'}$ and $|x-x'|<1$,  it follows that $|x|\leq |x'|+|x-x'| \leq 4$. Hence,
 \begin{align*}
\int_{\mathbb R} \int_{|x'|<2|x-x'|}|x|^{2\alpha}|{K_{\alpha}}(\rho, x,  \rho', x')| \dd x \dd \rho
     &\lesssim \int_{\mathbb R} \int_{|x'|<2|x-x'|}|{K_{\alpha}}(\rho, x,  \rho', x')| \dd x \dd \rho \\
     &\lesssim 1.
 \end{align*}
  When $x\in E^c_{x'}$ and $|x-x'|>1$, we have
\begin{align*}
    &\quad\int_{\mathbb R} \int_{\{|x'|<2|x-x'|, \ |x-x'|\geq 1\}}|x|^{2\alpha}|{K_{\alpha}}(\rho, x,  \rho', x')| \dd x \dd \rho\\
    &\lesssim \int_{\mathbb R} \int_{|x-x'|\geq 1}|x-x'|^{2\alpha}|{K_{\alpha}}(\rho, x,  \rho', x') |\dd x \dd \rho\\
    &\lesssim \int_{\mathbb R^{d+1}}\underbrace{ |x-x'|^{2\alpha} \ee^{-\frac{1}{16}|x-x'|^2}}_{\lesssim\;  1}\Big|{K_{\alpha}}\Big({\rho}, \frac{x}{\sqrt 2}, \rho', \frac{x'
    }{\sqrt 2} \Big) \Big|\dd x' \dd \rho' \\
    &\lesssim 1.
\end{align*}
This proves \eqref{decay in x-2} and completes the proof.
 \end{proof}
\subsection{Symbols for fractional powers $\AH^{\alpha}$}
Using the heat kernel representation of the  fractional powers operator $\AH^{\alpha}$ in Subsection \ref{sub: heat kernel rep},  we firstly calculate the symbol of the heat semigroup $\ee^{-t \AH}$.
Together  \eqref{heat-formula-1}, the fact that $\widehat \Phi_{\mu}=(-i)^{|\mu|} \Phi_{\mu}$, with Plancherel's theorem, we have
\begin{align}
      &\ee^{-t \AH}f(\rho, x)\notag
      \\
      &=\sum_{\mu} \frac{1}{\sqrt{2\pi}}\int_{\mathbb R} \ee^{i\tau \rho} \ee^{-\tau^2 t} \ee^{-(2|\mu|+d)t} \ee^{-\frac{\pi i}{4}2|\mu|}(\mathcal F_{\rho, x}f(\tau, \xi), \Phi_{\mu}(\xi))\Phi_{\mu}(x) \dd \tau \notag
      \\
      &=\frac{1}{(2\pi)^{(d+1)/2}}\int_{\mathbb R^{d+1}} \ee^{i\tau \rho} \ee^{i\xi\cdot x} p_t(\rho, x, \tau, \xi) \mathcal F_{\rho, x}f(\tau, \xi) \dd \tau \dd \xi \label{symbol of semigroup},
\end{align}
where
 \begin{align*}
     p_t(\rho, x, \tau, \xi)=\sum_{\mu} \ee^{-i \xi \cdot x}\ee^{-\tau^2 t} \ee^{-(2|\mu|+d)t} \ee^{-\frac{\pi i}{4}2|\mu|} \Phi_{\mu}(\xi) \Phi_{\mu}(x).
 \end{align*}
In view of Mehler's formula \eqref{Mehler's formula},   we  have
\begin{align*}
    p_t(\rho, x, \tau, \xi)= c_d (\cosh 2t )^{-\frac{d}{2}} \ee^{-b(t, x, \tau, \xi )},
\end{align*}
where $c_d={(2\pi)^{-d/2}}$ and
\begin{align}\label{form of b}
b(t, x, \tau, \xi ) &=\frac{1}{2} (|x|^2+|\xi|^2)\tanh 2t +2i x  \cdot \xi \sech 2t (\sinh t)^2 +t \tau^2.
\end{align}


Now we have
\begin{lem}\label{symbol of fractional powers}
    Let $\alpha\in \mathbb R$. The symbol $\sigma_{\alpha}(\rho, x, \tau, \xi)$ of the operator $\AH^{\alpha}$ belongs to the symbol class $G^{2\alpha}$ defined by Definition \ref{generalized symbol}.
    \end{lem}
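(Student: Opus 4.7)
The plan is to handle $\alpha<0$ and $\alpha>0$ separately (the case $\alpha=0$ is trivial, since $\AH^0$ is the identity with symbol $1\in G^0$), and to reduce the positive case to the negative one via Proposition~\ref{composition}.

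For $\alpha<0$, I combine \eqref{negative power and semigroup} with the symbol formula \eqref{symbol of semigroup} to write
$$
\sigma_\alpha(\rho,x,\tau,\xi)=\frac{c_d}{\Gamma(-\alpha)}\int_0^\infty t^{-\alpha-1}(\cosh 2t)^{-d/2}\ee^{-b(t,x,\tau,\xi)}\dd t,
$$
which is independent of $\rho$, so every $\rho$-derivative vanishes automatically. Writing $R=|x|+|\omega|$, the Taylor expansions of $\tanh 2t$ and $\sech 2t\sinh^2 t$ near $t=0$ together with their asymptotics as $t\to\infty$ yield the uniform estimates $\Re b\gtrsim \min(t,1)R^2$ and $|\partial^{\kappa'}_{x,\tau,\xi}b|\lesssim (1+t)\min(t,1)\,R^{(2-|\kappa'|)_+}$ for any multi-index $\kappa'$ with $|\kappa'|\ge 1$, together with $(\cosh 2t)^{-d/2}\lesssim \ee^{-dt}$ for $t\ge 1$. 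Differentiating under the integral via Fa\`a di Bruno's formula and keeping track of the powers of $t$ produced by each derivative of $b$, the $t\in(0,1]$ contribution to $|\partial^\kappa_{x,\tau,\xi}\sigma_\alpha|$ reduces, after the substitution $s=tR^2$, to a finite sum of terms of the form $R^{2\alpha-|\kappa|}\int_0^{R^2} s^{j}\ee^{-cs}\dd s$ with $j>-1$. For $R\ge 1$ these integrals are uniformly bounded (convergence at $s=0$ uses $\alpha<0$), giving $R^{2\alpha-|\kappa|}$; for $R\le 1$ the crude bound $|\partial^\kappa\ee^{-b}|\le C_\kappa$ combined with local integrability of $t^{-\alpha-1}$ near $0$ yields $\lesssim 1\sim\langle R\rangle^{2\alpha-|\kappa|}$. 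The $t\ge 1$ tail has super-polynomial decay in $R$ thanks to the factors $(\cosh 2t)^{-d/2}$ and $\ee^{-\Re b}$ with $\Re b\gtrsim|x|^2+|\xi|^2+t\tau^2$.

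For $\alpha>0$, write $\alpha=N-\beta$ with $N=\lceil\alpha\rceil\in\mathbb N$ and $\beta=N-\alpha\in[0,1)$. The symbol of $\AH$ is $\tau^2+|\xi|^2+|x|^2$, which belongs to $G^2$ by direct inspection. By induction on $N$ using Proposition~\ref{composition}, $\sigma_{\AH^N}\in G^{2N}$. When $\beta>0$, the negative case just proved gives $\sigma_{\AH^{-\beta}}\in G^{-2\beta}$, and a further application of Proposition~\ref{composition} yields $\sigma_\alpha=\sigma_{\AH^N}\#\sigma_{\AH^{-\beta}}\in G^{2N-2\beta}=G^{2\alpha}$; the case $\beta=0$ (\,$\alpha=N$\,) is already covered.

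The main obstacle is the fine control of $\partial^\kappa \ee^{-b}$ in the negative case: one must carefully account for the powers of $t$ extracted by each derivative of $b$ (noting that $x,\xi$-derivatives contribute factors of $\tanh 2t$ or $\sech 2t\sinh^2 t$, while $\tau$-derivatives contribute factors of $t$), so that the substitution $s=tR^2$ produces precisely the exponent $R^{2\alpha-|\kappa|}$ rather than a weaker $R^{2\alpha}$ bound. The integrability at $s=0$ of the resulting integrand is the crucial point where the hypothesis $\alpha<0$ is used.
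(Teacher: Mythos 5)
Your argument is correct, and for $\alpha<0$ it is essentially the paper's own proof: the heat-kernel representation \eqref{negative power and semigroup}, the splitting of the $t$-integral at $t=1$, the lower bound $\Re b\gtrsim \min(t,1)(|x|^2+|\tau|^2+|\xi|^2)$, and the Fa\`a di Bruno bookkeeping of the powers of $t$ contributed by each derivative of $b$ (only first and second derivatives of $b$ are nonzero, which is what makes the count close up to give $\ip{X}^{2\alpha-|\kappa|}$ rather than $\ip{X}^{2\alpha}$). Where you genuinely diverge is the case $\alpha>0$: the paper works directly with the representation \eqref{positive power and semigroup}, estimating $\partial_t^N p_t$ and running the same splitting again, whereas you factor $\AH^{\alpha}=\AH^{N}\AH^{-\beta}$ with $N=\lceil\alpha\rceil$ and $\beta=N-\alpha\in[0,1)$, observe that the symbol of the differential operator $\AH^{N}$ is a polynomial of degree $2N$ in $(x,\tau,\xi)$ and hence lies in $G^{2N}$, and then invoke Proposition \ref{composition} together with the already-proved negative case. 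Your route is shorter and avoids the $\partial_t^N b$ estimates entirely; its only additional ingredient is the identification of the symbol of the composition $\AH^{N}\circ\AH^{-\beta}$ with $\sigma_{\AH^N}\#\sigma_{\AH^{-\beta}}$, which here is unproblematic (the left factor is a differential operator, so the oscillatory integral collapses to the exact finite Leibniz sum $\sum_\gamma\frac{1}{\gamma!}\partial_\omega^\gamma\sigma_{\AH^N}\,D_z^\gamma\sigma_{\AH^{-\beta}}$), and in any case the paper itself uses this identification without comment in its applications of Proposition \ref{composition}. The trade-off is that the paper's direct computation yields the explicit integral formula for $\sigma_\alpha$ with $\alpha>0$, which it reuses when computing the Riesz transform symbols, while your argument only establishes the class membership.
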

\begin{proof}
From the explicit formula of $p_t$, we know that $\sigma_\alpha$ does not depend on $\rho$. Thus, the estimates for $\sigma_\alpha$ do not depend on $\rho$ and its derivatives on $\rho$ are zero. For brevity, we will write  $\sigma_{\alpha}( x, \tau, \xi)$ instead of $\sigma_{\alpha}(\rho, x, \tau, \xi)$, and similarly for other terms which are independent of $\rho$.

The case $\alpha=0$ is obvious. 

For the case $\alpha<0$. Using the equality of \eqref{negative power and semigroup}, we have
\begin{align*}
   \sigma_{\alpha}( x, \tau, \xi)=\frac{1}{ \Gamma(-\alpha)} \int_0^\infty t^{-\alpha-1} p_t( x, \tau, \xi) \dd t.
\end{align*}
We split $ \sigma_{\alpha}( x, \tau, \xi)$ into two parts as follows.
\begin{align*}
    \sigma_{\alpha}^1( x, \tau, \xi)=\frac{1}{ \Gamma(-\alpha)} \int_0^1 t^{-\alpha-1} p_t( x, \tau, \xi) \dd t,\\
   \sigma_{\alpha}^2( x, \tau, \xi)=\frac{1}{ \Gamma(-\alpha)} \int_1^\infty t^{-\alpha-1} p_t( x, \tau, \xi) \dd t.
\end{align*}

To estimate $\sigma_{\alpha}^1$, on the one hand, by the lower bound estimate for the real part of $b$ as $t\in(0, 1)$,
\begin{align*}
 \Re b(t, x, \tau, \xi ) \geq  c t (|x|^2+|\xi|^2+\tau^2),
\end{align*}
we have
\begin{align*}
   |\sigma_{\alpha}^1( x, \tau, \xi)|\leq\;  \frac{1}{ \Gamma(-\alpha)} \int_0^1 t^{-\alpha-1} e^{- c(|x|^2+|\xi|^2+\tau^2)  t} \dd t\;
   &\leq (1+|x|^2+|\xi|^2+\tau^2)^\alpha.
\end{align*}

On the other hand, by  \eqref{form of b},  the derivatives of  $b(t,  x, \tau, \xi)$ satisfy 
\begin{align*}
    \partial_{x_j} b(t,  x, \tau, \xi)&=x_j \tanh 2t +2i \xi_j\sech 2t (\sinh t)^2 \sim x_j t + \xi_jt^2,\\
    \partial_{x_j} ^2b(t,  x, \tau, \xi)&= \tanh 2t \sim t,\\
     \partial_{\xi_j} b(t,  x, \tau, \xi)&= \xi_j \tanh 2t +2i x_j\sech 2t (\sinh t)^2 \sim \xi_jt + x_jt^2,\\
     \partial_{\xi_j} ^2b(t,  x, \tau, \xi)&= \tanh 2t \sim t,\\
     \partial_{\tau}b(t,  x, \tau, \xi)&= 2\tau t, \quad \partial_{\tau}^2b(t,  x, \tau, \xi)= 2 t,
\end{align*}
as $t\to 0$.  In conclusion, we obtain with the shorthand $X=(x,\tau,\xi)$ that 
\begin{align*}
   |\del_X^\beta b(t,  x, \tau, \xi) |   \lesssim \begin{cases}
    |X|^{2-|\beta|}t,  &|\beta|  \leq 2, \\
       0, & |\beta| \geq 3.
   \end{cases}
\end{align*}
By using Fa\`a di Bruno's formula, we obtain 
  \begin{align*}
    \partial_X^{\beta} \sigma_{\alpha}^1( x, \tau, \xi)=\frac{1}{ \Gamma(-\alpha)} \int_0^1 t^{-\alpha-1} p_t( x, \tau, \xi) \prod_{\substack{1\le j \le |\beta| \\\sum_j|\beta_j|n_j=|\beta|}} ( \partial_{X}^{\beta_j}b)^{n_j}\dd t. 
\end{align*}
Hence, for any $|\beta| \geq 1$, we get
\begin{align*}
    |\partial_X^{\beta} \sigma_{\alpha}^1( x, \tau, \xi)|&\lesssim \int_0^1 t^{-\alpha-1} e^{-c t |X|^2} \prod_{j=1}^{|\beta|}t^{n_j} \dd t \prod_{\substack{1\le j \le |\beta| \\\sum_j|\beta_j|n_j=|\beta|}}  |X|^{2n_j-|\beta_j|n_j}\\
    &\lesssim \ip{ X}^{2\alpha-|\beta|}.
\end{align*}
That is, $\sigma_{\alpha}^1 \in G^{2\alpha}$ for all $\alpha<0$.

To estimate $\sigma_{\alpha}^2$, since  $\cosh t \sim e^{t}$, and $\tanh t \geq t$ as $t\to\infty$,  we have
\begin{align*}
    \Re b(t,  x, \tau, \xi) \geq  ct |X|^2,
\end{align*}
which implies that 
\begin{align*}
   | \sigma_{\alpha}^2( x, \tau, \xi)| \lesssim  \int_1^\infty t^{-\alpha-1}  e^{-td } e^{-c |X|^2} \dd t 
    \lesssim e^{-|X|^2}.
\end{align*}
When we take partial derivatives in $X=(x,\tau,\xi)$, we only change the degree of the polynomials in $X$. The dominating term is still  $e^{-|X|^2}.$ Hence, we have  $\sigma_\alpha^2 \in G^{\alpha}$.

For the case $\alpha>0$.  By \eqref{positive power and semigroup}, we obtain the symbol of the operator $\AH^\alpha$,
\begin{align*}
   \sigma_{\alpha}( x, \tau, \xi)=\frac{(-1)^N}{\Gamma(N-\alpha)}\int_{0}^\infty t^{N-\alpha-1} \frac{\dd^N }{\dd t^N} p_t( x, \tau, \xi)\dd t.
\end{align*}
Arguing as in the case $\alpha<0$, we split it into two parts
\begin{align*}
     \sigma_{\alpha}^1( x, \tau, \xi)=\frac{(-1)^N}{\Gamma(N-\alpha)}\int_{0}^1t^{N-\alpha-1} \frac{\dd^N }{\dd t^N} p_t( x, \tau, \xi)\dd t,\\
     \sigma_{\alpha}^1( x, \tau, \xi)=\frac{(-1)^N}{\Gamma(N-\alpha)}\int_{1}^\infty t^{N-\alpha-1} \frac{\dd^N }{\dd t^N} p_t( x, \tau, \xi)\dd t.
\end{align*}
Note that
\begin{align*}
    &\quad\partial_t b (t, x, \tau, \xi)\\
    &= 2 (|x|^2+|\xi|^2) \sech^2 2t+\tau^2+ 2ix\cdot \xi [ -2 \sech^2 2t \sinh 2t \sinh^2 t+\tanh 2t] \\
&= 2 (|x|^2+|\xi|^2) \sech^2 2t+\tau^2 + 2ix\cdot \xi \sech^2 2t \sinh 2t.
\end{align*}
Thus, for $t\in(0,1)$ we have 
\begin{align*}
|\partial_t (\cosh 2t )^{-d/2}| \lesssim 1,   \quad  |\partial_t b (t, x, \tau, \xi)| \lesssim |X|^2,
\end{align*}
and
\begin{align*}
|\partial_t^N (\cosh 2t )^{-d/2}|&\lesssim 1,    \quad |\partial_t ^N b (t, x, \tau, \xi) | \lesssim |X|^{2N}.
\end{align*}

To compute the derivative estimates of $p_t( x, \tau, \xi)$ in the term $ \sigma_{\alpha}^1( x, \tau, \xi)$, we use the above estimates, Leibniz rule and Fa\`a di Bruno's formula, and obtain that 
\begin{align*}
   | \sigma_{\alpha}^1( x, \tau, \xi)| &\lesssim\int_{0}^1 t^{N-\alpha-1} e^{-c|X|^2} |X|^{2N}\dd t 
     \lesssim \ip{ X}^{2\alpha}.
\end{align*}
For the derivative estimates, we have
\begin{align*}
     |\partial_X^{\beta} \sigma_{\alpha}^1( x, \tau, \xi)|&\lesssim \int_0^1 t^{N-\alpha-1} e^{-c t |X|^2} \prod_{\substack{1\le j \le |\beta| \\\sum_j|\beta_j|n_j=|\beta|}}|X|^{2N}  |X|^{2n_j-|\beta_j|n_j} \\
    &\lesssim  \ip{ X}^{2\alpha-|\beta|}.
\end{align*}
For the term $\sigma_{\alpha}^2$, the exponential decay can be obtained as the case $\alpha<0$.

Summing up,  we have $\sigma_\alpha \in G^{2\alpha}$ for all $\alpha \in \mathbb R$, and this concludes the proof.
%
\end{proof}

\subsection{Riesz transforms and symbols}
The $j$th Riesz transforms associated with the operator $ \AH$ are defined as
\begin{align}\label{Riesz transform}
R_j= A_j\AH^{-1/2}, \quad -d\le j\le d.
\end{align}
In general, for any $m\in \mathbb N$ and $\mathbf j=(j_{1},\dots,j_m)$,  $-d \leq j_l\leq d$, the $\mathbf j$th Riesz transform of order $m$ is the operator
 \begin{align}\label{m order Riesz transform}
     R_{\mathbf j} = R_{j_1,\dots,j_m} &=A_{j_1}A_{j_2}\dots A_{j_m} \AH^{-m/2}\nonumber\\
     &=P_m(\partial_\rho, \partial_{x}, x) \AH^{-m/2},
 \end{align}
 where $P_m$ is a polynomial of degree $m$.
 
In this subsection, we will prove that the Riesz transforms  defined by \eqref{Riesz transform} and \eqref{m order Riesz transform} are bounded on classical Sobolev spaces by verifying that their symbols belong to the symbol class $S^0_{1,0}$. There are two ways to show this. The most obvious way would be a direct calculation by the heat semigroup. The symbol of  the operator $A_0 \ee^{-t \AH}$ is
$-i\tau p_t( x, \tau, \xi)$, so the symbol of Riesz transform $R_0$ is
\begin{align} \label{symbol of R0}
    \sigma_{R_0}(\rho, x, \tau, \xi)=\frac{-1}{\sqrt{\pi}} \int_0^\infty t^{-\frac{1}{2}}i\tau p_t( x, \tau, \xi) \dd t.
\end{align}
For the symbol of the operator $A_j \ee^{-t \AH}$, $1\leq j\leq d$, we use the formula \eqref{symbol of semigroup} to get
\begin{align*}
     A_j \ee^{-t \AH} f(\rho, x)
    =\int_{\mathbb R^{d+1}} \ee^{i\tau \rho} \ee^{i\xi\cdot x} p_t( x, \tau, \xi)(-i\xi_j+x_j+\del_{x_j} b ) \widehat f(\tau, \xi) \dd \tau \dd \xi.
\end{align*}
This shows that the symbol of the operator $A_j \ee^{-t \AH}$ is
\begin{align*}
    p_t( x, \tau, \xi)(-i\xi_j+x_j+\del_{x_j} b(t,\rho,x,\tau,\xi)).
\end{align*}
So, the symbols  for  Riesz transforms $R_j$  for $1\leq j \leq d$ are
\begin{align} \label{symbol of R_j}
     \quad \sigma_{R_j}(\rho, x, \tau, \xi)
    =\frac{-1}{\sqrt{\pi}} \int_0^\infty t^{-\frac{1}{2}}(i\xi_j-x_j+\del_{x_j} b) p_t( x, \tau, \xi) \dd \tau.
\end{align}
Similar calculations give the symbols for Riesz transforms $R_j $ for $-d \leq j \leq -1$.
It is then possible to use the integral forms  \eqref{symbol of R0} and \eqref{symbol of R_j} in a lengthy calculation like in the proof of Lemma \ref{symbol of fractional powers} to show that they belong to the symbol class $S^0_{1,0}$.

The second, simpler and shorter way is to take advantage of the symbol calculus for compositions in $G^m$, which we present below
\begin{prop}\label{prop:RT bd} The symbols $\sigma_{R_j}$  of Riesz transforms $R_j$ for $0\leq|j |\leq d$ belongs to the symbol class $S^0_{1,0}$, hence they are bounded on classical Sobolev spaces $W^{\alpha,p}(\mathbb R^{d+1})$ for any $\alpha \in \mathbb R$ and $1<p<\infty$. In addition, the same result holds for Riesz transforms $R_{\mathbf j}$ of high order.
\end{prop}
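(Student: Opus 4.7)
My plan is to combine Lemma \ref{symbol of fractional powers} with the composition calculus in Proposition \ref{composition}. This reduces the proof to the direct verification that each first-order operator $A_j$ has symbol in $G^1$, after which the Riesz transform $R_{\mathbf j}$ automatically has symbol in $G^0 \subset S^0_{1,0}$.

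The first step is to identify the symbols of the $A_j$. The operator $A_0 = -\partial_\rho$ has symbol $-i\tau$, while for $1 \leq j \leq d$ the operators $A_j = -\partial_{x_j} + x_j$ and $A_{-j} = \partial_{x_j} + x_j$ have symbols $\mp i\xi_j + x_j$. Each of these is bounded by $|x| + |\omega| \leq \ip{|x| + |\omega|}$, has at most one nonzero first-order derivative (of modulus one), and has vanishing higher derivatives, so all three symbols belong to $G^1$ in the sense of Definition \ref{generalized symbol}.

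Next, by Lemma \ref{symbol of fractional powers}, the symbol of $\AH^{-m/2}$ belongs to $G^{-m}$. Applying Proposition \ref{composition} $m$ times gives that the symbol of $R_{\mathbf j} = A_{j_1}\cdots A_{j_m}\AH^{-m/2}$ lies in $G^{m-m} = G^0$, which is contained in $S^0_{1,0}$ by the inclusion noted after Definition \ref{generalized symbol}. This simultaneously treats the first-order Riesz transforms $R_j$ (the case $m=1$) and their higher-order counterparts.

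Finally, the boundedness claims follow from the classical pseudodifferential calculus: any operator with symbol in $S^0_{1,0}$ is bounded on $L^p(\mathbb R^{d+1})$ for $1 < p < \infty$, and since $S^0_{1,0}$ is closed under composition with the Bessel potentials $(1 - \Delta_{\mathbb R^{d+1}})^{\pm \alpha/2} \in S^{\pm \alpha}_{1,0}$, the operator is likewise bounded on $W^{\alpha, p}(\mathbb R^{d+1})$ for every $\alpha \in \mathbb R$. I do not anticipate a substantive obstacle here; the delicate work (the case analysis on the signs of $m$ and $m'$) has already been carried out in the proof of Proposition \ref{composition}, and the present proposition is essentially a direct application of that calculus.
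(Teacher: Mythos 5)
Your proposal is correct and follows essentially the same route as the paper: the symbols of the $A_j$ lie in $G^1$, Lemma \ref{symbol of fractional powers} places $\AH^{-m/2}$ in $G^{-m}$, and Proposition \ref{composition} yields a composite symbol in $G^0\subset S^0_{1,0}$, whence $L^p$ and $W^{\alpha,p}$ boundedness by the standard calculus. (The only quibble is cosmetic: the symbol $\mp i\xi_j+x_j$ has two nonvanishing first-order derivatives, not one, but both are constants so the $G^1$ membership is unaffected.)
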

\begin{proof} The symbols of the operators $A_j$  are given by either $i\tau$ or $\pm i\xi_j+x_j $,  which belong to class $G^{1}$. From Proposition \ref{symbol of fractional powers},  the symbol of the operator $\AH^{-1/2}$ belongs to class $G^{-1}$.  By symbolic calculus in class $G^m$ (Proposition \ref{composition}), we obtain that symbols of Riesz transform $R_j$ for $0\leq|j |\leq d$  belong to class $S_{1,0}^{0}$, which implies the boundedness of  Riesz transform $R_j$ for $0\leq|j |\leq d$  on $W^{\alpha, p}$ for all $1<p<\infty$, (see \cite{Taylor, Taylor91}).

For Riesz transform $R_{\mathbf j}$ with high order, By Proposition \ref{composition} and the fact that the symbols of the operators $A_{j_1}A_{j_2}\dots A_{j_m} $ and $\AH^{-m/2}$ belongs to symbol classes $G^{m}$  and $G^{-m}$ respectively. Hence, the symbol of $R_{\mathbf j}$ belongs to the symbol the symbol class $S_{1,0}^0$, which implies the  result and completes the proof.
\end{proof}


 \section{Sobolev spaces associated to the partial harmonic oscillator}
\label{s:4-sobolev-spaces}
 Given any $p\in [1, \infty)$, and $\alpha>0$, we define the potential spaces associated to $ \AH$ by
\begin{align*}
    L_{ \AH}^{\alpha, p}(\mathbb R^{d+1}) = \AH^{-\alpha/2} (L^p(\mathbb R^{d+1})),
\end{align*}
 with the norm
 \begin{align*}
     \|f\|_{L_{\AH}^{\alpha, p}}  =\| g\|_{L^p(\mathbb R^{d+1})},
 \end{align*}
 where $g\in L^p(\mathbb R^{d+1})$ satisfies  $\AH^{-\alpha/2}g=f$.
\begin{rem} The norm is well defined since $ \AH^{-\alpha/2}$ is one-to-one and   bounded in $L^p(\mathbb R^{d+1})$.
Also, $C_0^\infty(\mathbb R^{d+1}) $ is dense in $L_{\AH}^{\alpha, p}(\mathbb R^{d+1}) $.
\end{rem}

For any nonnegative integer $k\geq 0$, we  can also define the Sobolev spaces associated to $ \AH$ by the differential operators $A_j$ as follows:
\begin{align*}
    W_{\AH}^{k, p}=\left\{f\in L^p(\mathbb R^{d+1}) \middle| \substack{ \displaystyle A_{j_1}A_{j_2}\dots A_{j_m} f \in L^p(\mathbb R^{d+1}), \\[0.5em]\displaystyle \text{for any}\;\; 
   1\leq m \leq k, \  0\leq |j_1|, \dots, |j_m| \leq d}\right\},
\end{align*}
with the norm
\begin{align*}
   \|f\|_{W_{\AH}^{k, p}}=\sum_{m=1}^k\Bigg(\sum_{j_1=-d}^d
   \dots \sum_{j_m=-d}^d\|A_{j_1}A_{j_2}\dots A_{j_m} f\|_{L^p}\Bigg)+\|f\|_{L^p}.
\end{align*}


\begin{thm} Let $k \in \mathbb N$ and $p\in(1, \infty)$. Then  we have
\begin{align*}
   W_{\AH}^{k, p}(\mathbb R^{d+1})=     L_{\AH}^{k, p}(\mathbb R^{d+1})
\end{align*}
with equivalence of norms.
\end{thm}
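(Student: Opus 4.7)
The plan is to establish the two inclusions separately, relying on the boundedness of the Riesz transforms (Proposition~\ref{prop:RT bd}), the $L^p$-boundedness of negative fractional powers (Proposition~\ref{prop31}), and the semigroup identity~\eqref{power property}.

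For the easier inclusion $L_{\AH}^{k,p} \hookrightarrow W_{\AH}^{k,p}$, given $f = \AH^{-k/2} g$ with $g \in L^p$ and a monomial $A_{j_1}\cdots A_{j_m}$ with $0 \le m \le k$, I would factor via~\eqref{power property},
\[
A_{j_1}\cdots A_{j_m} f = \bigl(A_{j_1}\cdots A_{j_m}\AH^{-m/2}\bigr)\bigl(\AH^{-(k-m)/2} g\bigr) = R_{\mathbf{j}}\bigl(\AH^{-(k-m)/2} g\bigr),
\]
extending the identity from $C_0^\infty$ to $L^p$ by density and continuity. Then $R_{\mathbf{j}}$ is bounded on $L^p = W^{0,p}$ by Proposition~\ref{prop:RT bd}, and $\AH^{-(k-m)/2}$ is bounded on $L^p$ by Proposition~\ref{prop31}, yielding $\|f\|_{W_{\AH}^{k,p}} \lesssim \|f\|_{L_{\AH}^{k,p}}$ after summing over $\mathbf{j}$.

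For the reverse inclusion, I would show that $\AH^{k/2} f \in L^p$ for $f \in W_{\AH}^{k,p}$, splitting on the parity of $k$. When $k = 2\ell$, expanding $\AH^\ell$ via $\AH = \tfrac{1}{2}\sum_{j=0}^d (A_j A_j^* + A_j^* A_j)$ gives a finite sum of monomials $A_{i_1}\cdots A_{i_{2\ell}}$ of length $k$ (using $A_j^* = A_{-j}$ for $j \ge 1$ and $A_0^* = -A_0$), each of which is directly controlled by $\|f\|_{W_{\AH}^{k,p}}$. When $k = 2\ell+1$, I would use~\eqref{power property} to write $\AH^{k/2} = \AH^{-1/2}\AH^{\ell+1}$, expand $\AH^{\ell+1}$ into monomials of length $k+1$, and for each such monomial peel off the first factor:
\[
\AH^{-1/2} A_{i_1}\bigl(A_{i_2}\cdots A_{i_{2\ell+2}} f\bigr) = R_{-i_1}^{*}\bigl(A_{i_2}\cdots A_{i_{2\ell+2}} f\bigr),
\]
where $R_{-i_1}^{*} = \AH^{-1/2} A_{i_1}$ is the adjoint of the Riesz transform $R_{-i_1} = A_{i_1}^{*}\AH^{-1/2}$. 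The inner factor is a monomial of length $k$ applied to $f$, hence lies in $L^p$; the outer factor is bounded on $L^p$ because $R_{-i_1}$ is bounded on $L^{p'}$ by Proposition~\ref{prop:RT bd} and $1 < p' < \infty$.

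The main obstacle is the odd case: the natural expansion of $\AH^{\ell+1}$ produces monomials of length $k+1$, formally exceeding the available regularity of $f$. The peeling-and-adjoint trick above resolves this by absorbing one derivative into an adjoint Riesz transform, which must be interpreted as an $L^p$-bounded operator via duality. The remaining technical points---density of $C_0^\infty$ in the relevant spaces and the rigorous extension of operator identities from $C_0^\infty$ to $L^p$---follow routinely from the boundedness results established in Section~\ref{s:3-frac-powers}.
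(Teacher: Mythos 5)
Your proof of the inclusion $L_{\AH}^{k,p}\subset W_{\AH}^{k,p}$ matches the paper's: factor the monomial through $\AH^{-m/2}$ to get a Riesz transform $R_{\mathbf j}$ acting on $\AH^{-(k-m)/2}g$, then use Propositions \ref{prop:RT bd} and \ref{prop31}. For the reverse inclusion your route is genuinely different. The paper argues by induction on $k$ via duality, writing $\|\AH^{k/2}f\|_p=\sup_{\|g\|_{p'}=1}\int \AH^{k}f\,\AH^{-k/2}g$ and invoking the operator identity $\sum A_{j_k}^*\cdots A_{j_1}^*A_{j_1}\cdots A_{j_k}=2^k\AH^k+\sum_{m=1}^{k-1}c_m\AH^m$; integration by parts turns the main term into $\int A_{j_1}\cdots A_{j_k}f\cdot R_{\mathbf j}g$, and the lower-order terms $c_m\AH^m$ are absorbed by the induction hypothesis. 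You instead split on the parity of $k$: for $k=2\ell$ you expand $\AH^\ell$ distributively into monomials of length exactly $k$ (no reordering, hence no commutator corrections, and each monomial is controlled by the $W_{\AH}^{k,p}$ norm), and for $k=2\ell+1$ you peel one factor off $\AH^{-1/2}\AH^{\ell+1}$ as $\AH^{-1/2}A_{i_1}=\pm R_{-i_1}^*$, bounded on $L^p$ since $R_{-i_1}$ is bounded on $L^{p'}$. Both arguments rest on the same duality (adjoint boundedness of the Riesz transforms), but yours avoids the induction and the lower-order correction constants entirely, at the cost of the parity case split; the paper's induction, conversely, handles all $k$ uniformly. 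The only point you should make explicit is the identification, on $C_0^\infty$, of the spectral power $\AH^\ell$ with the $\ell$-fold composition of the differential expression (so that the distributive expansion is legitimate), together with the density of $C_0^\infty$ in $W_{\AH}^{k,p}$ needed to extend the a priori estimate --- but the paper relies on both of these implicitly as well, so this is not a gap relative to the published argument.
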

\begin{proof} We firstly prove that $  L_{\AH}^{k, p}(\mathbb R^{d+1})\subset W_{\AH}^{k, p}(\mathbb R^{d+1})$. For any function $f\in L_{\AH}^{k, p}(\mathbb R^{d+1})$, there exists a function $ g\in L^{p}(\mathbb R^{d+1})$, such that $f=\AH^{-k/2} g$. Hence, by  the $L^p$ boundedness of Riesz transforms in Proposition \ref{prop:RT bd}, we have
\begin{align*}
\|A_{j_1}A_{j_2}\dots A_{j_m} f \|_p = \|A_{j_1}A_{j_2}\dots A_{j_m} \AH^{-k/2} g\|_{p} 
   \lesssim \|g\|_{p} \leq C \|f\|_{L_{\AH}^{k, p}(\mathbb R^{d+1})},
\end{align*}
which implies that $  L_{\AH}^{k, p}(\mathbb R^{d+1})\subset W_{\AH}^{k, p}(\mathbb R^{d+1})$.

Next, we show that   $ W_{\AH}^{k, p}(\mathbb R^{d+1})\subset L_{\AH}^{k, p}(\mathbb R^{d+1})$ by induction.
First, it is easy to check that for any $f, g\in C_0^\infty(\mathbb R^{d+1})$, we have
\begin{align}\label{duality identity}
   \int_{\mathbb R^{d+1}}  f g =2 \int_{\mathbb R^{d+1}} \sum_{-d\le j\le d} R_j f R_j g.
\end{align}
Thus, by duality and  the boundedness of Riesz transform, we obtain for any $g\in L^p(\mathbb R^d)$ that
\begin{align*}
    \|g\|_p \lesssim \sum_{-d\le j\le d} \|R_j g\|_p.
\end{align*}
Hence, we obtain by choosing $g=\AH^{1/2}f$ that 
\begin{align}\label{inverse Riesz transform}
    \|f\|_{L_{\AH}^{1, p}(\mathbb R^{d+1})} =\|\AH^{1/2}f\|_p\lesssim \sum_{-d\le j\le d} \|A_j f\|_p \lesssim \|f\|_{W_{\AH}^{1,p}(\mathbb R^{d+1})}.
\end{align}
That is, $ W_{\AH}^{k, p}(\mathbb R^{d+1})\subset L_{\AH}^{k, p}(\mathbb R^{d+1})$ for $k=1$.

Suppose that for any $f\in C_0^\infty(\mathbb R^{d+1})$ and any  $1\leq m<k$, we have
\begin{align}\label{ind assump}
     \|f\|_{L_{\AH}^{m, p}(\mathbb R^{d+1})}\leq \|f\|_{W^{m,p}_{H_{par}}(\mathbb R^{d+1})}.
\end{align}
It follows by duality that
\begin{align*}
   \|f\|_{L_{\AH}^{k, p}(\mathbb R^{d+1})}&=\|\AH^{k/2}f\|_{L^p(\mathbb R^{d+1})}  =\sup_{g\in C_0^\infty; \|g\|_{p'}=1} \int_{\mathbb R^{d+1}}\AH^{k/2}f g\dd z\\
   &=\sup_{g\in C_0^\infty: \|g\|_{p'}=1} \int_{\mathbb R^{d+1}}\AH^{k}f  \AH^{-k/2}g\dd z.
\end{align*}
Since there exist constants $c_1, c_2, \dots , c_{k-1}$ such that
\begin{align*}
\sum_{0\leq |j_1|, \dots, |j_k| \leq d} A_{j_k}^*\dots A_{j_1}^* A_{j_1}\dots A_{j_k} = 2^k \AH^{k}+\sum_{m=1}^{k-1} c_m \AH^m,
\end{align*}
we obtain  that
\begin{align*}
   &\quad 2^k  \int_{\mathbb R^{d+1}}\AH^{k}f  \AH^{-k/2}g\dd z\\
    &= \int_{\mathbb R^{d+1}} \sum_{0\leq |j_1|, \dots, |j_k| \leq d}\left( A_{j_k}^*\dots A_{j_1}^* A_{j_1}\dots A_{j_k}-\sum_{m=1}^{k-1} c_m \AH^m\right) f \AH^{-k/2} g\\
    &=\!\sum_{0\leq |j_1|, \dots, |j_k| \leq d} \int_{\mathbb R^{d+1}}\!\!A_{j_1}\dots A_{j_k} f R_{j_1}\dots R_{j_k} g-\sum_{m=1}^{k-1} c_m \int_{\mathbb R^{d+1}} \AH^{m/2} f\AH^{-\frac{k-m}{2}} g\\
    &\leq  \!\sum_{0\leq |j_1|, \dots, |j_k| \leq d} \! \| A_{j_1}\dots A_{j_k} f\|_{p} \| R_{j_1\dots j_k} g\|_{p'} +\sum_{m=1}^{k-1} |c_m|\|\AH^{m/2 } f\|_p \|\AH^{-\frac{k-m}{2}} g\|_{p'}\\
    &\leq C \|f\|_{W_{\AH}^{k,p}(\mathbb R^{d+1})},
\end{align*}
where we used \eqref{ind assump} and the boundedness of Riesz transforms in the last inequality. This completes the proof that $ W_{\AH}^{k, p}(\mathbb R^{d+1})\subset L_{\AH}^{k, p}(\mathbb R^{d+1})$ for $k\ge 1$.
\end{proof}

\begin{prop} Let $p \in (1, \infty) $.
    The Riesz transforms $R_j$ $(-d\le j\le d)$  are bounded on the space $L_{\AH}^{\alpha, p}(\mathbb R^{d+1})$.


\end{prop}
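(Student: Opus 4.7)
The plan is to reduce the required estimate
\[
\|R_j f\|_{L_{\AH}^{\alpha,p}} \lesssim \|f\|_{L_{\AH}^{\alpha,p}}
\]
for $f \in C_0^\infty(\mathbb R^{d+1})$ (dense in the potential space by the preceding remark) to the $L^p$-boundedness of a single conjugated operator, and then establish that boundedness via the symbolic calculus of the class $G^m$. Unpacking the definition, $\|R_j f\|_{L_{\AH}^{\alpha,p}} = \|\AH^{\alpha/2} R_j f\|_p$ and $\|f\|_{L_{\AH}^{\alpha,p}} = \|\AH^{\alpha/2} f\|_p$, so with $h = \AH^{\alpha/2} f$ the task becomes
\[
\bigl\|\AH^{\alpha/2} R_j \AH^{-\alpha/2} h\bigr\|_p \lesssim \|h\|_p.
\]
Thus it suffices to show that $T_j \coloneq \AH^{\alpha/2} R_j \AH^{-\alpha/2}$ extends to a bounded operator on $L^p(\mathbb R^{d+1})$.

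Next I would assemble $T_j$ at the symbol level. By Lemma \ref{symbol of fractional powers}, the symbols of $\AH^{\alpha/2}$ and $\AH^{-\alpha/2}$ lie in $G^{\alpha}$ and $G^{-\alpha}$ respectively. The symbols of the first-order operators $A_j$ ($-d\le j\le d$) are in $G^{1}$, while $\AH^{-1/2}$ has symbol in $G^{-1}$, so the symbol of $R_j = A_j \AH^{-1/2}$ lies in $G^{0}$ by Proposition \ref{composition} (as already observed in the proof of Proposition \ref{prop:RT bd}). Applying Proposition \ref{composition} twice more, the symbol of $T_j$ belongs to $G^{\alpha + 0 + (-\alpha)} = G^{0} \subset S^{0}_{1,0}$. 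The conclusion then follows from the classical $L^p$-boundedness ($1<p<\infty$) of pseudodifferential operators with symbols in $S^{0}_{1,0}$.

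A more hands-on variant, which makes the operator identities transparent and avoids any concern about whether the composed symbol truly represents $T_j$ on a dense subspace, is to invoke the commutation relations of Lemma \ref{commute lemma}. For $1\le j\le d$ these give $\AH^{\alpha/2} A_j = A_j (\AH+2)^{\alpha/2}$, and since $\AH^{-1/2}$ and $(\AH+2)^{\alpha/2}$ both come from the functional calculus of $\AH$ they commute, so
\[
\AH^{\alpha/2} R_j f = R_j (\AH+2)^{\alpha/2} \AH^{-\alpha/2} \bigl(\AH^{\alpha/2} f\bigr).
\]
The boundedness of $R_j$ on $L^p$ is Proposition \ref{prop:RT bd}, so everything reduces to showing that $U \coloneq (\AH+2)^{\alpha/2} \AH^{-\alpha/2}$ is bounded on $L^p$. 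By the same symbol-class argument applied only to $U$, its symbol lies in $G^{\alpha} \# G^{-\alpha} \subset G^{0} \subset S^{0}_{1,0}$, hence the required $L^p$ bound holds. The case $j=0$ is immediate from $A_0 \AH^{\alpha/2} = \AH^{\alpha/2} A_0$, and $-d\le j\le -1$ is handled in the same way with $(\AH-2)^{\alpha/2}$ in place of $(\AH+2)^{\alpha/2}$, which is well-defined for $d\ge 3$ by Corollary \ref{kernel of H+-2}.

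The main obstacle I anticipate is the technical one of aligning the pseudodifferential composition with the functional-calculus composition on a suitable dense class: one has to know that the symbol-level composition $G^{\alpha} \# G^{0} \# G^{-\alpha}$ produces an operator that actually coincides with $T_j$ on a set dense in $L^p$, so that the $S^{0}_{1,0}$ bound extends to all of $L^p$. This is essentially automatic once one works with Schwartz data and uses the mapping properties of each factor established earlier in the paper, but it is the only place where some care is required; the order arithmetic itself and the inclusion $G^{0}\subset S^{0}_{1,0}$ are immediate from the definitions and from Proposition \ref{composition}.
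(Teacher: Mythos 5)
Your primary argument is exactly the paper's proof: reduce to the $L^p$-boundedness of the conjugated operator $\AH^{\alpha/2}R_j\AH^{-\alpha/2}$ (the paper writes the conjugation with the signs of $\alpha$ reversed, apparently a typo, but the symbol arithmetic is identical) and conclude via Proposition \ref{composition} and Lemma \ref{symbol of fractional powers} that its symbol lies in $G^0\subset S^0_{1,0}$. Your second, commutation-based variant is a valid alternative for $d\ge 3$ (the hypothesis of Lemma \ref{commute lemma}), but the symbol-calculus route is the one the paper takes and needs no such restriction.
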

\begin{proof}
 By the definition of $L_{\AH}^{\alpha, p}(\mathbb R^{d+1})$, it suffices to show for  $0\leq |j |\leq d $ that the operators
\begin{align*}
    T_j&= \AH^{-\alpha/2}A_j  \AH^{-1/2} \AH^{\alpha/2}
    \end{align*}
are bounded on $L^p(\mathbb R^{d+1})$.
By Proposition \ref{composition}, Lemma \ref{symbol of fractional powers} and the fact that the symbol of the operators $A_j$ belongs to the symbol class $G^1$,
the symbols of the operator $T_j$  belong to the symbol class $S^0_{1,0}$.
Hence, the operators $T_j$ are bounded on $L^p(\mathbb R^{d+1})$ for $1<p<\infty$ (see \cite{Taylor, Taylor91}), which proves the result.\end{proof}

A direct consequence of Proposition \ref{decay in x} is that any function in $L_{\AH}^{\alpha, p}(\mathbb R^{d+1})$ enjoys some decay in  the $x$ direction.

\begin{cor}\label{cor: decay} If $p\in [1, \infty)$, $\alpha>0$ and $f\in L_{ \AH}^{\alpha, p}(\mathbb R^{d+1})$, then $|x|^\alpha f $      belongs to $L^{p}(\mathbb R^{d+1})$.
\end{cor}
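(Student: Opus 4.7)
The plan is a one-line deduction from Proposition \ref{decay in x}, phrased carefully with the correct index. By definition of $L_{\AH}^{\alpha,p}(\mathbb R^{d+1})$, any $f$ in this space can be written as $f=\AH^{-\alpha/2}g$ for a unique $g\in L^p(\mathbb R^{d+1})$ with $\|g\|_p=\|f\|_{L_{\AH}^{\alpha,p}}$. I would then apply Proposition \ref{decay in x} with the parameter $\alpha$ there replaced by $\alpha/2$, which yields that the weighted operator $|x|^\alpha \AH^{-\alpha/2}$ is bounded on $L^p(\mathbb R^{d+1})$.

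Combining these two observations gives
\begin{align*}
\bigl\||x|^\alpha f\bigr\|_{L^p(\mathbb R^{d+1})} = \bigl\||x|^\alpha \AH^{-\alpha/2} g\bigr\|_{L^p(\mathbb R^{d+1})} \lesssim \|g\|_{L^p(\mathbb R^{d+1})} = \|f\|_{L_{\AH}^{\alpha,p}},
\end{align*}
which is the claimed conclusion (and in fact gives the stronger quantitative bound with implicit constant depending only on $p,\alpha,d$).

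There is essentially no obstacle here: Proposition \ref{decay in x} has already done all the nontrivial work via the Schur-test estimates on the kernel $K_{\alpha/2}(z,z')$. The only thing to check is that the replacement $\alpha \mapsto \alpha/2$ is legitimate, which it is since Proposition \ref{decay in x} is stated for all $\alpha>0$ and $\alpha/2>0$ whenever $\alpha>0$. If a self-contained statement is preferred, one could additionally remark that the argument shows $L_{\AH}^{\alpha,p}$ embeds continuously into the weighted space $L^p(\mathbb R^{d+1};|x|^{\alpha p}\,\dd z)$.
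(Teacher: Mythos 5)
Your proposal is correct and is exactly the deduction the paper intends: writing $f=\AH^{-\alpha/2}g$ and applying Proposition \ref{decay in x} with its parameter set to $\alpha/2$, so that $|x|^{\alpha}\AH^{-\alpha/2}$ is bounded on $L^p(\mathbb R^{d+1})$. The paper states the corollary as a ``direct consequence'' without writing out these steps, and your version supplies precisely that one-line argument with the correct index substitution.
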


Next, we show the relations between space $L_{\AH}^{\alpha,p}$  and spaces  $W^{\alpha, p}(\mathbb R^{d+1})$, $L^{\alpha, p}_{H}(\mathbb R^{d+1}) $ adapted to the Laplacian and Hermite operators, respectively.
\begin{thm}
 Let $\alpha>0$ and $p\in (1, \infty),$ then
\begin{enumerate}
    \item $L^{\alpha, p}_{H}(\mathbb R^{d+1}) \varsubsetneqq L^{\alpha, p}_{\AH}(\mathbb R^{d+1}) \varsubsetneqq W^{\alpha, p}(\mathbb R^{d+1})$.\label{thm-space-inclusion-2}
    \item  If $ f\in W^{\alpha, p}(\mathbb R^{d+1})$ and has compact support, then $f\in  L_{ \AH}^{\alpha, p}(\mathbb R^{d+1}) $. \label{thm-space-inclusion-3}
\end{enumerate}

\end{thm}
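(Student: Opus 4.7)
For the two continuous inclusions in the first part, the strategy is to establish $L^p$-boundedness of the appropriate composition of fractional powers via pseudodifferential calculus. For $L_H^{\alpha,p}\subset L_{\AH}^{\alpha,p}$, I would write $\AH^{\alpha/2}f=(\AH^{\alpha/2}H^{-\alpha/2})(H^{\alpha/2}f)$ and show $\AH^{\alpha/2}H^{-\alpha/2}$ is bounded on $L^p$. Lemma \ref{symbol of fractional powers} places the symbol of $\AH^{\alpha/2}$ in $G^{\alpha}$, while the Thangavelu-type proposition stated in Section \ref{s:2-prelim} places the symbol of $H^{-\alpha/2}$ in $\Gamma^{-\alpha}\subset G^{-\alpha}$. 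Proposition \ref{composition} then produces a composed symbol in $G^{0}\subset S^0_{1,0}$, which is $L^p$-bounded for $1<p<\infty$ by classical pseudodifferential theory. For $L_{\AH}^{\alpha,p}\subset W^{\alpha,p}$, I would write $(1-\Delta_{\mathbb R^{d+1}})^{\alpha/2}f=[(1-\Delta_{\mathbb R^{d+1}})^{\alpha/2}\AH^{-\alpha/2}]\AH^{\alpha/2}f$; the factor $\AH^{-\alpha/2}$ has symbol in $G^{-\alpha}\subset S^{-\alpha}_{1,0}$, the Fourier multiplier $(1-\Delta_{\mathbb R^{d+1}})^{\alpha/2}$ lies in $S^{\alpha}_{1,0}$, so the standard $S^m_{1,0}$ composition calculus of \cite{Taylor91} places the composition in $S^0_{1,0}$ and delivers $L^p$-boundedness.

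\textbf{Strictness.} To separate $L_H^{\alpha,p}$ from $L_{\AH}^{\alpha,p}$, I would take $f(\rho,x)=\ip{\rho}^{-\beta}\Phi_0(x)$, where $\Phi_0$ is the ground-state Hermite function on $\mathbb R^d$ and $\beta\in(1/p,\alpha+1/p)$ (nonempty since $\alpha>0$). Because $\Phi_0$ is the eigenfunction of $-\Delta+|x|^2$ with eigenvalue $d$, the partial harmonic oscillator acts as $\AH f=((d-\partial_\rho^2)\ip{\rho}^{-\beta})\Phi_0$, hence $\AH^{\alpha/2}f=((d-\partial_\rho^2)^{\alpha/2}\ip{\rho}^{-\beta})\Phi_0$; the one-dimensional constant-coefficient factor lies in $L^p(\mathbb R)$ because $\ip{\rho}^{-\beta}\in W^{\alpha,p}(\mathbb R)$, so $f\in L_{\AH}^{\alpha,p}$. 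Conversely $|\rho|^{\alpha}f\notin L^p$ thanks to $\beta<\alpha+1/p$, and the Hermite analogue of Corollary \ref{cor: decay} (obtained by repeating the proof of Proposition \ref{decay in x} with $H$ in place of $\AH$) rules out $f\in L_H^{\alpha,p}$. For the strict inclusion $L_{\AH}^{\alpha,p}\subsetneq W^{\alpha,p}$ I would take the mirror construction $g(\rho,x)=\eta(\rho)\ip{x}^{-\beta}$ with $\eta\in C_0^\infty(\mathbb R)$ and $\beta\in(d/p,\alpha+d/p)$; explicit differentiation gives $g\in W^{\alpha,p}$, while $|x|^{\alpha}g\notin L^p$, so Corollary \ref{cor: decay} forbids $g\in L_{\AH}^{\alpha,p}$.

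\textbf{The second part.} For a compactly supported $f\in W^{\alpha,p}$ I would route the conclusion through the Hermite Sobolev space. By the characterization in \cite{BT06}, $L_H^{\alpha,p}$ coincides, up to norm equivalence, with $W^{\alpha,p}\cap L^p(\mathbb R^{d+1},\ip{z}^{\alpha p}\dd z)$. Both ingredients are automatic for compactly supported $f$: the Sobolev one by hypothesis, and the weighted $L^p$ one because $\ip{z}^{\alpha}$ is bounded on $\supp f$. Hence $f\in L_H^{\alpha,p}$, and the first inclusion in the first part upgrades this to $f\in L_{\AH}^{\alpha,p}$.

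\textbf{Main obstacle.} The main care required is in navigating the different symbol classes: the ambient inclusion $G^m\subset S^m_{1,0}$ only holds for $m\le 0$, so one cannot simply invoke classical $S^m_{1,0}$-composition when a factor such as $\AH^{\alpha/2}$ carries a genuinely anisotropic positive-order symbol (lying in neither $\Gamma^{\alpha}$ nor $S^{\alpha}_{1,0}$ for $\alpha>0$). One must instead ensure in each composition that the total order drops to $0$ (or less) inside $G^m$ via Proposition \ref{composition} before claiming $L^p$-boundedness. The other delicate point is the Hermite-side decay estimate needed in the strict inclusion argument; I would either cite it from \cite{BT06} or, more self-containedly, repeat the Mehler-based heat kernel argument of Proposition \ref{decay in x} verbatim with $H$ replacing $\AH$.
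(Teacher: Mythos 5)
Your proposal is correct, and for the two continuous inclusions and for part (2) it follows essentially the same route as the paper: both arguments reduce the inclusions to the $L^p$-boundedness of $\AH^{\alpha/2}H^{-\alpha/2}$ and $(1-\Delta_{\mathbb R^{d+1}})^{\alpha/2}\AH^{-\alpha/2}$, obtained by composing symbols in $G^{\alpha}$ with $\Gamma^{-\alpha}\subset G^{-\alpha}$ (resp.\ $S^{\alpha}_{1,0}$ with $G^{-\alpha}\subset S^{-\alpha}_{1,0}$) via Proposition \ref{composition} and the classical calculus, and part (2) is in both cases outsourced to the Bongioanni--Torrea characterization of $L^{\alpha,p}_{H}$ together with the first inclusion. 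The only genuine divergence is in the strictness counterexamples. The paper takes slowly decaying data $g_1,g_2\in L^p$ and defines $f_i$ by applying the relevant negative power, then uses positivity of the kernels to get a pointwise lower bound $f_i\gtrsim (2+|\rho|)^{-1/p-\alpha}(2+|x|)^{-1/p-\alpha}$ (resp.\ $(2+|\rho|)^{-1/p-\alpha}$), so that membership in the larger space is automatic and exclusion from the smaller one follows from the weighted bound of Corollary \ref{cor: decay} and its Hermite analogue. You instead write down explicit tensor products $\ip{\rho}^{-\beta}\Phi_0(x)$ and $\eta(\rho)\ip{x}^{-\beta}$ and verify membership directly (via the eigenfunction reduction to a one-dimensional multiplier, resp.\ direct differentiation); the exclusion step is identical in spirit. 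Your version avoids having to lower-bound a convolution, at the price of a small eigenfunction computation; both versions rely on the same unproved-in-the-paper auxiliary fact, namely the Hermite analogue of Corollary \ref{cor: decay} (the paper also uses it implicitly when it deduces $f_2\notin L^{\alpha,p}_{H}$ from $|\rho|^{\alpha}f_2\notin L^p$), and your remark that it follows by repeating the proof of Proposition \ref{decay in x} with the full Mehler kernel, or by citing Bongioanni--Torrea, is the right way to discharge it.
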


\begin{proof} We firstly show the inclusion in $(1)$.  It suffices to verify that the symbols of
\begin{align*}
    (1-\Delta_{\mathbb R^{d+1}})^{\alpha/2} \AH^{-\alpha/2},\;\; \text{and} \;\;  \AH^{\alpha/2} H^{-\alpha/2}
\end{align*}
belong to the symbol class $S_{1,0}^0$ and so  they define bounded operators on $L^p(\mathbb R^{d+1})$, (see \cite{Taylor, Taylor91}).

For the former: since the symbol of the operator $(1-\Delta_{\mathbb R^{d+1}})^{\alpha/2}$ belongs to class $S^\alpha_{1,0}$ and  the symbol of the operator $\AH^{-\alpha/2} $ belongs to class $ G^{-\alpha} $ due to Lemma \ref{symbol of fractional powers} and class $  S^{-\alpha}_{1,0}$ , we obtain that the symbol of the operator $ (1-\Delta_{\mathbb R^{d+1}})^{\alpha/2} \AH^{-\alpha/2}$ belongs to $S_{1,0}^0$. Therefore,  the operator $(1-\Delta_{\mathbb R^{d+1}})^{\alpha/2} \AH^{-\alpha/2}$ is  bounded on $L^p(\mathbb R^{d+1})$.

%
For the later:  it is known for the symbol $q_\alpha(\rho, x, \tau, \xi)$ of the operator $H^{-\alpha/2}$ that  $q_\alpha \in G^{-\alpha}$  in \cite{Thangavelu}  since
\begin{align*}
      |D_z^\beta D_{\tau}^\gamma D_{\xi}^{\delta}q_{\alpha}(\rho, x, \tau, \xi) |&\lesssim (1+|\tau|+|\xi|+|\rho|+|x|) ^{-\alpha-|\beta|-|\gamma|-|\delta|}\\
      &\lesssim (1+|\tau|+|\xi|+|x|) ^{-\alpha-|\beta|-|\gamma|-|\delta|}.
\end{align*}
As the symbol of the operator $\AH^\alpha $ belong to class $ G^{\alpha}$,  the symbol of the operator $\AH^{\alpha/2} H^{-\alpha/2}$ belongs to $S_{1,0}^0$ by  Proposition \ref{composition}, which implies that the operator $\AH^{\alpha/2} H^{-\alpha/2}$ is  bounded on $L^p(\mathbb R^{d+1})$.

Next, we show the nonequivalence between them.  Define
\begin{align*}
    g_1(\rho, x)&=\frac{1}{
    (1+\rho)^{\frac{1}{p}+\alpha}}\frac{1}{(1+|x|)^{\frac{1}{p}+\alpha}}, & \quad 
     f_1(\rho, x)&=(I-\Delta_{\mathbb R^{d+1}})^{-\alpha/2} g_1(\rho, x),
    \\
 g_2(\rho, x)&=\frac{1}{(1+\rho)^{\frac{1}{p}+\alpha}} \Phi_{\mu}(x), & \quad 
  f_2(\rho, x)&=H^{-\alpha/2} g_2(\rho, x).
\end{align*}
We claim that
\begin{align*}
    f_1\in W^{\alpha, p}(\mathbb R^{d+1})\setminus L^{\alpha, p}_{\AH}(\mathbb R^{d+1}),\; \; \text{and} \;\;
     \ f_2 \in L^{\alpha, p}_{\AH}(\mathbb R^{d+1})\setminus L^{\alpha, p}_{H}(\mathbb R^{d+1}).
\end{align*}
Let $G_\alpha(z)$ be the kernel  of the opertor $(I-\Delta_{\mathbb R^{d+1}})^{-\alpha/2}$, that is,
\begin{align*}
    G_\alpha(z)=\frac{1}{(4\pi)^{\alpha/2} \Gamma(\alpha/2)} \int_0^\infty \ee^{-\frac{\pi|z|^2}{t}-\frac{t}{4\pi}} t^{\alpha/2-(d+1)/2} \dd t.
\end{align*}
On the one hand, it is easy to see that $g_1 \in L^p(\mathbb R^{d+1})$, hence $f_1\in W^{\alpha, p}(\mathbb R^{d+1})$.
On the other hand, since $ G_\alpha(z)$ is  positive,
\begin{align*}
    f_1(\rho, x) = \int_{\mathbb R^{d+1}}  G_\alpha(z' )  g_1(z-z') \dd z'
    \geq (2+|\rho|)^{-1/p-\alpha}(2+|x|)^{-1/p-\alpha}\int_{|z|<1} G_\alpha (z') \dd z', 
\end{align*}
which implies that $|x|^\alpha f_1 \notin L^{p}(\mathbb R^{d+1})$, and therefore  we have $f_1\notin  L^{\alpha, p}_{\AH}(\mathbb R^{d+1})$ by Corollary \ref{cor: decay}.

Similarly, using the fact that $g_2 \in L^p(\mathbb R^{d+1})$ holds, we have  $f_2 \in  L^{\alpha, p}_{\AH}(\mathbb R^{d+1})$. At the same time, we have
\begin{align*}
    f_2(\rho, x) \geq (2+|\rho |)^{-1/p-\alpha} \int_{|z|<1} \Psi_\alpha (z') \dd \rho' \dd x', 
\end{align*}
which implies that  $|\rho|^\alpha f_2 \notin L^{p}(\mathbb R^{d+1})$,  and therefore  we have   $f_2\notin L^{\alpha, p}_{H}(\mathbb R^{d+1})$.

Lastly,  the statement \eqref{thm-space-inclusion-3} follows from \eqref{thm-space-inclusion-2} and  \cite[Theorem 3 (iii)]{BT06}.\qedhere
 
\end{proof}

\section{Some Integral Inequalities adapted to the operator $\AH$}
\label{s:5-fun-ineqs}
In this section, we obtain some integral inequalities associated to the partial harmonic oscillator $\AH$.
\subsection{Hardy--Littlewood--Sobolev inequality. }  Let $0<\alpha<d+1$.  By \eqref{control function}, we have
 \begin{align}\label{HLS1}
     \AH ^{-\alpha/2} f(z)\leq C \int_{\mathbb R^{d+1}} \frac{|f(z')|}{|z-z'|^{d+1-\alpha}} \dd z',
 \end{align}
then  we have

 \begin{prop}\label{Hardy--Littlewood--Sobolev inequality}
    Let $p, q>1$  and $0<\alpha<d+1$ with $\frac{1}{q}=\frac{1}{p}-\frac{\alpha}{d+1}$, then the operator  $\AH^{-\alpha/2}$ is bounded from $L^p(\mathbb R^{d+1})$  to $L^q(\mathbb R^{d+1})$.
\end{prop}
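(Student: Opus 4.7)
The plan is to deduce this directly from the pointwise control already established in \eqref{HLS1}, combined with the classical Hardy--Littlewood--Sobolev inequality on $\mathbb R^{d+1}$. The kernel estimates have already done the heavy lifting, so the remaining work is essentially bookkeeping.

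First, I would verify that \eqref{HLS1} indeed holds in the relevant range. The kernel $K_{\alpha/2}(z,z')$ of $\AH^{-\alpha/2}$ is controlled by $\Psi_{\alpha/2}(z-z')$ from Proposition \ref{prop31}. Under the assumption $0<\alpha<d+1$, we have $\alpha/2<(d+1)/2$, so the first case of \eqref{control function} applies: on $D_-=\{|z-z'|<1\}$ the kernel is dominated by $|z-z'|^{\alpha-(d+1)}$, and on $D_+=\{|z-z'|\ge1\}$ by $\ee^{-|z-z'|^2/16}$. Since $\ee^{-|z-z'|^2/16}\lesssim |z-z'|^{-(d+1-\alpha)}$ for $|z-z'|\ge 1$, both pieces are uniformly bounded by $C|z-z'|^{-(d+1-\alpha)}$, which yields \eqref{HLS1} for $f\ge 0$ and in general after replacing $f$ by $|f|$.

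Second, I would invoke the classical Hardy--Littlewood--Sobolev inequality on $\mathbb R^{d+1}$: for $1<p<q<\infty$ with $\frac{1}{q}=\frac{1}{p}-\frac{\alpha}{d+1}$, the Riesz potential
\begin{equation*}
I_\alpha g(z)=\int_{\mathbb R^{d+1}} \frac{g(z')}{|z-z'|^{d+1-\alpha}}\dd z'
\end{equation*}
is bounded from $L^p(\mathbb R^{d+1})$ to $L^q(\mathbb R^{d+1})$. Combining this with the pointwise bound gives
\begin{equation*}
\|\AH^{-\alpha/2} f\|_{L^q(\mathbb R^{d+1})} \le C\|I_\alpha |f|\|_{L^q(\mathbb R^{d+1})} \lesssim \|f\|_{L^p(\mathbb R^{d+1})},
\end{equation*}
which is the claimed inequality.

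There is no substantive obstacle: the only subtlety is checking that the exponential piece of $\Psi_{\alpha/2}$ is dominated by the Riesz kernel $|z-z'|^{-(d+1-\alpha)}$ on the far-field set $D_+$, which is immediate. The whole argument reduces $\AH^{-\alpha/2}$ to a positive operator majorized pointwise by the classical Riesz potential of order $\alpha$ on $\mathbb R^{d+1}$, after which classical HLS concludes the proof.
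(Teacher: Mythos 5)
Your proposal is correct and follows exactly the paper's route: the paper likewise deduces the result immediately from the pointwise bound \eqref{HLS1} together with the classical Hardy--Littlewood--Sobolev inequality. Your extra verification that both pieces of $\Psi_{\alpha/2}$ (the local singularity $|z-z'|^{\alpha-(d+1)}$ and the far-field Gaussian) are dominated by the Riesz kernel $|z-z'|^{-(d+1-\alpha)}$ is a correct filling-in of the step the paper leaves implicit.
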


\begin{proof}
This is obvious from the rough estimate \eqref{HLS1} and the classical Hardy--Littlewood--Sobolev inequality in \cite{LIebL}.
	\end{proof}


In fact,  by  \eqref{control function}, the integral kerel of the operator $\AH^{-\alpha/2} f$  is controlled by an integral function which has exponential decay away from the zero,  which implies the following refined estimates.
\begin{thm}\label{generalized Hardy--Littlewood--Sobolev inequality} Let $0<\alpha<d+1$. Then the following holds:
\begin{enumerate}
    \item \label{thm-HLS1} there exists a constant $C>0$ such that
    \begin{align*}
        \| \AH^{-\alpha/2} f\|_{q} \leq C \|f\|_1,
    \end{align*}
    for all $f\in L^1(\mathbb R^{d+1})$ if and only if $1\leq q <\frac{d+1}{d+1-\alpha}$.
    \item \label{thm-HLS2}  there exists a constant $C>0$ such that
    \begin{align*}
        \| \AH^{-\alpha/2} f\|_{\infty} \leq C \|f\|_p
    \end{align*}
    for all $f\in L^p(\mathbb R^{d+1})$ if and only if $p>\frac{d+1}{\alpha}$.
\item \label{thm-HLS3} If $1<p<\infty$, $1<q<\infty$ and $\frac{1}{p}-\frac{\alpha}{d+1} \leq \frac{1}{q}<\frac{1}{p}$, then there exists a constant $C>0$ such that
\begin{align*}
    \| \AH^{-\alpha/2} f\|_{q} \leq C \|f\|_p
\end{align*}
for all $f\in L^{p}(\mathbb R^{d+1})$.

\end{enumerate}

\end{thm}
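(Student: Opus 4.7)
The argument for all three parts rests on the pointwise estimate $|\AH^{-\alpha/2} f(z)| \lesssim (\Psi \ast |f|)(z)$ provided by Proposition~\ref{prop31}, where $\Psi = \Psi_{\alpha/2}$ is the envelope from \eqref{control function}. Since $0<\alpha<d+1$, we are in the first case of \eqref{control function}: $\Psi(z)$ behaves like $|z|^{\alpha-(d+1)}$ near the origin and decays like $e^{-c|z|^2}$ at infinity. A direct integration then shows that $\Psi \in L^r(\mathbb R^{d+1})$ precisely when $1 \le r < \frac{d+1}{d+1-\alpha}$, the sharp threshold being dictated by the local singularity.

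Given this, sufficiency in each part is a single application of Young's convolution inequality. In \eqref{thm-HLS1} I take $r=q$, giving $\|\AH^{-\alpha/2}f\|_q \le \|\Psi\|_q \|f\|_1$ in precisely the claimed range. In \eqref{thm-HLS2}, Hölder's inequality yields $\|\AH^{-\alpha/2}f\|_\infty \le \|\Psi\|_{p'} \|f\|_p$, so the constraint $p'<\frac{d+1}{d+1-\alpha}$, equivalent to $p>\frac{d+1}{\alpha}$, suffices. In \eqref{thm-HLS3}, the Sobolev endpoint $\frac{1}{q}=\frac{1}{p}-\frac{\alpha}{d+1}$ is exactly Proposition~\ref{Hardy--Littlewood--Sobolev inequality}; for the strict interior range I apply Young with $\frac{1}{r}=1+\frac{1}{q}-\frac{1}{p}$, the admissibility $r<\frac{d+1}{d+1-\alpha}$ being equivalent to $\frac{1}{q}>\frac{1}{p}-\frac{\alpha}{d+1}$.

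For the necessity in \eqref{thm-HLS1}, the main step is a matching lower bound for $K_{\alpha/2}(\cdot,0)$. Using $B(t,z,0)=\tfrac{1}{2}\coth 2t\cdot|x|^2 + \tfrac{\rho^2}{4t}$ and the small-$t$ asymptotics $\sinh 2t\sim 2t$, $\coth 2t\sim 1/(2t)$, one restricts the integral representation of $K_{\alpha/2}$ to $t\in(|z|^2,1)$, where $B(t,z,0)\lesssim 1$ and $(\sinh 2t)^{-d/2}\sim t^{-d/2}$, yielding
\[
K_{\alpha/2}(z,0) \;\gtrsim\; \int_{|z|^2}^{1} t^{(\alpha-d-3)/2}\,dt \;\gtrsim\; |z|^{\alpha-(d+1)} \qquad (|z|<1),
\]
since the exponent is $<-1$ thanks to $\alpha<d+1$. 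Taking $f_\varepsilon=\varepsilon^{-(d+1)}\phi(\cdot/\varepsilon)$ for a nonnegative bump $\phi$ with $\int\phi=1$ supported in the unit ball, the assumed bound $\|\AH^{-\alpha/2}f_\varepsilon\|_q\le C$ together with Fatou's lemma (using positivity of $K_{\alpha/2}$ and its continuity in the second argument off the diagonal) gives $\|K_{\alpha/2}(\cdot,0)\|_q<\infty$, forcing $q(\alpha-(d+1))>-(d+1)$, i.e.\ $q<\frac{d+1}{d+1-\alpha}$. The necessity in \eqref{thm-HLS2} then follows by duality: since $\AH^{-\alpha/2}$ is self-adjoint, the $L^p\to L^\infty$ bound is equivalent to an $L^1\to L^{p'}$ bound, and \eqref{thm-HLS1} then gives $p'<\frac{d+1}{d+1-\alpha}$, i.e.\ $p>\frac{d+1}{\alpha}$.

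The main obstacle I anticipate is the kernel lower bound used for necessity in \eqref{thm-HLS1}; once this mirror image of Proposition~\ref{prop31} is in hand, every other step is either a direct invocation of Young's inequality or already covered by Proposition~\ref{Hardy--Littlewood--Sobolev inequality}.
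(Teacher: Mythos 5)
Your proposal is correct, and while it rests on the same foundation as the paper's proof (the two-sided kernel bounds: the upper envelope $\Psi_{\alpha/2}$ from \eqref{control function} and a matching lower bound $K_{\alpha/2}(z,z')\gtrsim |z-z'|^{\alpha-(d+1)}$ near the diagonal, which the paper records as \eqref{lower bound for K}), it diverges from the paper in two places. For part \eqref{thm-HLS3} the paper proves only the endpoint $\tfrac1q=\tfrac1p-\tfrac{\alpha}{d+1}$ via the classical Hardy--Littlewood--Sobolev inequality and then fills in the interior of the range by Riesz--Thorin interpolation against parts \eqref{thm-HLS1} and \eqref{thm-HLS2}; you instead hit the interior directly with Young's inequality, using that $\Psi_{\alpha/2}\in L^r$ exactly for $r<\tfrac{d+1}{d+1-\alpha}$ and that $\tfrac1r=1+\tfrac1q-\tfrac1p<\tfrac{d+1}{d+1-\alpha}$ is equivalent to $\tfrac1q>\tfrac1p-\tfrac{\alpha}{d+1}$. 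Both are sound; yours is arguably more self-contained for the open range, while the paper's interpolation packages the three parts together. For the necessity in part \eqref{thm-HLS2} the paper constructs an explicit extremal $f(z)=|z|^{-\alpha}(\log\tfrac1{|z|})^{-\frac{\alpha}{d+1}(1+\varepsilon)}$ lying in $L^{(d+1)/\alpha}$ with $\AH^{-\alpha/2}f(0)=\infty$, which in particular rules out the endpoint $p=\tfrac{d+1}{\alpha}$ itself; your duality argument (self-adjointness plus Tonelli for the positive kernel reduces $L^p\to L^\infty$ to $L^1\to L^{p'}$, and $p=\tfrac{d+1}{\alpha}$ corresponds exactly to the excluded endpoint $p'=\tfrac{d+1}{d+1-\alpha}$ of part \eqref{thm-HLS1}) reaches the same sharp conclusion with less computation, at the cost of having to justify the pairing $\langle Tf,g\rangle=\langle f,Tg\rangle$, which is immediate here since the kernel is nonnegative. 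Your derivation of the lower bound by restricting the subordination integral to $t\in(|z|^2,1)$ is essentially the paper's computation. One cosmetic remark: the paper's necessity computation in part \eqref{thm-HLS1} contains an apparent typo ($q(d+1-\alpha)\geq d$ rather than $\geq d+1$); your threshold $q(d+1-\alpha)<d+1$ is the correct one and matches the statement.
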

\begin{proof}For the case \eqref{thm-HLS1}.  By generalized Minkowski's inequality, we obtain for function $f\in L^1(\mathbb R^{d+1})$ that 
\begin{align*}
   \int_{\mathbb R^{d+1}} ( \AH^{-\alpha/2} f) ^q (z) \dd z \leq \left(\int_{\mathbb R^{d+1}}\left(  \int_{\mathbb R^{d+1}} K_{\alpha/2}(z, z')^q \dd z \right)^{1/q}| f(z')|\dd z'\right)^{q}
\end{align*}
By \eqref{kernel-ineq} and \eqref{control function}, we have
\begin{align*}
     \int_{\mathbb R^{d+1}} K_{\alpha/2}(z, z')^q \dd z
    \lesssim \int_{D_{-}} \frac{\dd z}{|z-z'|^{q(d+1-\alpha)}}+\int_{D_{+}} \ee^{-q|z-z'|^2/{16}} \dd z,
\end{align*}
where $ D_{-}=\{|z-z'|<1\}$ and $D_{+}=\{|z-z'|\geq 1\}$.
The right hand side in the above estimate  is finite if $1\leq q <\frac{d+1}{d+1-\alpha}$.
For the converse, note that as $|z-z'|<1$, we have
\begin{align}
    K_{\alpha/2}(z, z') \geq C_\alpha \ee^{-|x+x'|^2} \int_0^1 t^{\alpha-1-\frac{d+1}{2}} \ee^{- |z-z'|^2/t}\dd t  \geq C_\alpha \frac{\ee^{-|x+x'|^2}}{|z-z'|^{d+1-\alpha}}.
    \label{lower bound for K}
\end{align}
Let $f_n$ be an approximation of identity. Then we have
\begin{align*}
  \int_{\mathbb R^{d+1}} \left(\int_{\mathbb R^{d+1}} \frac{\ee^{-|x+x'|^2}}{|z-z'|^{d-\alpha}} f_n(z')  \dd z'\right)^q   \dd z
 \xrightarrow[n\to\infty]{} \int_{|z| \leq 1} \frac{\ee^{-q|x|^2}}{|z|^{q(d+1-\alpha)}} \dd z,
\end{align*}
which is $\infty$ for $q(d+1-\alpha) \geq d$, and completes the proof of the necessity that $1\leq q\leq d/(d+1-\alpha)$.

For the case $(2)$. By H\"older's inequality, we get
\begin{align*}
  \left| \int_{\mathbb R^{d+1}} K_{\alpha/2 }(z, z') f(z') \dd z'\right|   \leq \|f\|_{p} \left(\int_{\mathbb R^{d+1}} K_{\alpha/2 }(z, z')^{p'} \dd z'\right)^{1/p'}.
\end{align*}
Using a similar argument as in the proof of the case $(1)$, we know that the right hand side is finite when $p>\frac{d+1}{\alpha}$.

Conversely, by choosing 
\begin{align*}
    f(z)=\begin{cases}
        |z|^{-\alpha} (\log \frac{1}{|z|})^{-\frac{\alpha}{d+1}(1+\varepsilon )}, & \text{if } |z|\leq 1/2, \\
        0, & \text{if } |z|\geq  1/2, 
    \end{cases}
\end{align*}
then we have $f\in L^p(\mathbb R^{d+1})$ for all $p\leq (d+1)/\alpha$. However, the function $\AH^{-\alpha/2}f$ is essentially unbounded  since we have  by \eqref{lower bound for K} that
\begin{align*}
   \AH^{-\alpha/2} f(0)  \geq C\int_{|z'|\leq 1/2} |z|^{-(d+1)}\left (\log \frac{1}{|z'|}\right)^{-\frac{\alpha}{d+1}(1+\varepsilon )} \dd z' =\infty,
\end{align*}
where $ \varepsilon $ is small.

 The case \eqref{thm-HLS3} now follows from  Proposition \ref{Hardy--Littlewood--Sobolev inequality}, the inequality \eqref{thm-HLS1}, the inequality \eqref{thm-HLS2} and the Riesz--Thorin interpolation theorem.
\end{proof}
\begin{rem}\label{remark Hardy--Littlewood--Sobolev inequality} By Corollary \ref{kernel of H+-2} and the similar argument as in the above proof,  we have  the following result: 
Let $p, q>1$ and $0<\alpha<d+1$ with $\frac{1}{p}-\frac{\alpha}{d+1} \leq \frac{1}{q}<\frac{1}{p}$, then there exists a constant $C$ such that  for all $f\in L^{p}(\mathbb R^{d+1})$, we have
\begin{align*}
    \| (\AH+2)^{-\alpha/2} f\|_{q} &\leq C \|f\|_p, \\
     \| (\AH-2)^{-\alpha/2} f\|_{q} &\leq C \|f\|_p, \ d\geq 3.
\end{align*}

\end{rem}

\subsection{Gagliardo--Nirenberg--Sobolev inequality}

We define the  gradient operator associated to the operator $\AH$ as follows:
\begin{align*}
    \nabla_{\AH} f:=(A_0f, A_1 f, \dots, A_d f,  A_{-1} f, \dots, A_{-d}  f).
\end{align*}
\begin{thm}
Let $d\geq 3$ and $1<p, q <\infty$ satisfy $\frac{1}{p}-\frac{1}{d+1} \leq \frac{1}{q}<\frac{1}{p}$. Then for any $f\in L_{\AH}^{1,p}(\mathbb R^{d+1})$, we have
\begin{align*}
    \|f\|_q \leq C \|\nabla_{\AH} f\|_p. 
\end{align*}
\end{thm}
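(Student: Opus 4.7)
The plan is to reduce the inequality to two ingredients that are already established: the Hardy--Littlewood--Sobolev estimate for $\AH^{-1/2}$ (Theorem \ref{generalized Hardy--Littlewood--Sobolev inequality}\eqref{thm-HLS3} with $\alpha=1$), and the equivalence $\|\AH^{1/2}f\|_p \sim \|\nabla_{\AH}f\|_p$ coming from the $L^p$-boundedness of the Riesz transforms. In schematic form, the desired bound is obtained by chaining
\begin{equation*}
\|f\|_q \;\lesssim\; \|\AH^{1/2}f\|_p \;\lesssim\; \|\nabla_{\AH}f\|_p.
\end{equation*}

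For the first inequality, I would start by noting that, for $f \in L_{\AH}^{1,p}(\mathbb R^{d+1})$, the function $g := \AH^{1/2}f$ lies in $L^p(\mathbb R^{d+1})$ by the very definition of the potential space, and $f = \AH^{-1/2}g$. Since our hypotheses $1<p,q<\infty$ and $\tfrac1p - \tfrac{1}{d+1} \le \tfrac1q < \tfrac1p$ coincide exactly with the assumptions of Theorem \ref{generalized Hardy--Littlewood--Sobolev inequality}\eqref{thm-HLS3} at $\alpha = 1$, that theorem applies and yields
\begin{equation*}
 \|f\|_q \;=\; \|\AH^{-1/2}(\AH^{1/2}f)\|_q \;\le\; C\,\|\AH^{1/2}f\|_p.
\end{equation*}

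For the second inequality, I would invoke \eqref{inverse Riesz transform}, which gives
\begin{equation*}
 \|\AH^{1/2}f\|_p \;\le\; C\sum_{-d\le j\le d}\|A_j f\|_p \;\lesssim\; \|\nabla_{\AH}f\|_p.
\end{equation*}
Combining this with the previous display delivers the stated Gagliardo--Nirenberg--Sobolev inequality.

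The only genuine point to check is an approximation argument: the estimate \eqref{inverse Riesz transform} was derived for $f\in C_0^\infty(\mathbb R^{d+1})$, while we need it on all of $L_{\AH}^{1,p}$. This extension is routine using the density of $C_0^\infty(\mathbb R^{d+1})$ in $L_{\AH}^{1,p}(\mathbb R^{d+1})$ (noted after the definition of that space) together with the $L^p$-boundedness of the Riesz transforms $R_j$ from Proposition \ref{prop:RT bd}, so it is the mildest step rather than a genuine obstacle. Overall, no new analytic input is required beyond assembling these already-proved pieces.
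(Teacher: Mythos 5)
Your proof is correct, but it takes a genuinely different route from the paper's. The paper first bounds $\|f\|_q\le\sum_j\|R_jf\|_q$ via the duality identity \eqref{duality identity}, then uses the commutation relations of Lemma \ref{commute lemma} to rewrite $R_jf=A_j\AH^{-1/2}f$ as $(\AH\pm2)^{-1/2}A_jf$, and finally applies the $L^p\to L^q$ bounds for $(\AH\pm2)^{-1/2}$ from Remark \ref{remark Hardy--Littlewood--Sobolev inequality}; it is precisely the kernel bound for $(\AH-2)^{-1/2}$ in Corollary \ref{kernel of H+-2} that forces the hypothesis $d\ge3$. You instead chain $\|f\|_q\lesssim\|\AH^{1/2}f\|_p\lesssim\|\nabla_{\AH}f\|_p$, using only Theorem \ref{generalized Hardy--Littlewood--Sobolev inequality}\eqref{thm-HLS3} for $\AH^{-1/2}$ itself together with the converse Riesz-transform estimate \eqref{inverse Riesz transform} (which rests on \eqref{duality identity} at the $L^p$ level and needs no commutation, since $R_j\AH^{1/2}f=A_jf$ directly). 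This is shorter, avoids Lemma \ref{commute lemma} and the shifted operators $(\AH\pm2)^{-1/2}$ entirely, and — worth noting — nowhere uses $d\ge3$, so your argument actually establishes the inequality for all $d\ge1$; the dimensional restriction in the statement appears to be an artifact of the paper's method. Your closing remark about extending \eqref{inverse Riesz transform} from $C_0^\infty$ to all of $L_{\AH}^{1,p}$ by density is the right way to finish, and the same density remark is implicitly needed in the paper's own proof.
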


\begin{proof} By duality and \eqref{duality identity}, we have
\begin{align*}
    \|f\|_{q} \leq \sum_{-d\le j\le d} \|R_j f\|_{q}.
\end{align*}
By Lemma \ref{commute lemma}, we obtain $R_{0} f =\AH^{-1/2} A_{0} f$ and   
\begin{align*}
    R_j f=(\AH+2 \operatorname{sgn}j)^{-1/2} A_j f,  \
   \qquad  1\le |j|\le d .
\end{align*}
Hence, by Remark \ref{remark Hardy--Littlewood--Sobolev inequality} and  Theorem \ref{generalized Hardy--Littlewood--Sobolev inequality},  we obtain for  $\frac{1}{p}-\frac{1}{d+1} \leq \frac{1}{q}<\frac{1}{p}$ that 
\begin{align*}
     \|f\|_{q} &\leq \sum_{-d\le j\le d} \|R_{j} f\|_{q}\\
     &\leq \sum_{1\leq |j| \leq d} \|(\AH+2\operatorname{sgn}j)^{-1/2} A_j f\|_{q}+ \| \AH^{-1/2} A_{0} f\|_{q}\\
     &\leq \sum_{-d\le j\le d} \|A_jf\|_{p} = \|\nabla_{\AH} f\|_p. 
\end{align*}
 This completes the proof.
\end{proof}

\begin{rem} The classical Gagliardo--Nirenberg--Sobolev inequality in $\mathbb R^{d+1}$ holds with $\frac1q = \frac1p - \frac \alpha{d+1}$. The result here holds for a larger range of $(p, q)$ due to the extra decay property of functions $f\in L_{\AH}^{1,p}(\mathbb R^{d+1})$.

\end{rem}
\subsection{Hardy's inequality }Recall that the Hardy's inequality is:
\begin{align}\label{Hardy's inequality for laplacian}
    \||z|^{-\alpha} f(z)\|_{L^p(\mathbb R^{d+1})} \lesssim \|(-\Delta_{\mathbb R^{d+1}})^{\alpha/2} f\|_{L^p(\mathbb R^{d+1})}, \;\; 0<\alpha<\frac{d+1}{p},
\end{align}
see  \cite{Triebel}.
We extend classical Hardy's inequality for the Laplacian operator $-\Delta_{\mathbb R^{d+1}}$ to the operator $\AH$ as follows:

\begin{thm} Let $1<p<\infty$. Then, for any $f\in C_0^\infty (\mathbb R^{d+1})$, we have
\begin{align*}
    \||z|^{-\alpha} f(z)\|_{L^p(\mathbb R^{d+1})} \lesssim \|\AH^{\alpha/2} f\|_{L^p(\mathbb R^{d+1})},\  0<\alpha<\frac{d+1}{p} .
\end{align*}
In particular, the inequality 
\begin{align*}
    \||z|^{-1} f(z)\|_{L^p(\mathbb R^{d+1})} \lesssim \|\nabla_{\AH} f\|_{L^p(\mathbb R^{d+1})}
\end{align*}
holds when $1<p<d+1$.
  \end{thm}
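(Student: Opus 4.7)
The plan is a three-step chain reducing the inequality to classical Hardy. First, by the classical Hardy inequality \eqref{Hardy's inequality for laplacian},
\[
 \||z|^{-\alpha} f\|_{L^p(\mathbb R^{d+1})} \lesssim \|(-\Delta_{\mathbb R^{d+1}})^{\alpha/2} f\|_{L^p(\mathbb R^{d+1})},
\]
so it suffices to prove $\|(-\Delta_{\mathbb R^{d+1}})^{\alpha/2} f\|_{L^p} \lesssim \|\AH^{\alpha/2} f\|_{L^p}$. Setting $g = \AH^{\alpha/2} f$, this amounts to showing the $L^p$-boundedness of the operator $(-\Delta_{\mathbb R^{d+1}})^{\alpha/2}\AH^{-\alpha/2}$.

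I would factor this operator through the classical Bessel potential as $T_1 \circ T_2$, where
\[
 T_1 = (-\Delta_{\mathbb R^{d+1}})^{\alpha/2}(1-\Delta_{\mathbb R^{d+1}})^{-\alpha/2},
 \qquad T_2 = (1-\Delta_{\mathbb R^{d+1}})^{\alpha/2} \AH^{-\alpha/2}.
\]
The operator $T_2$ was already shown to be bounded on $L^p$ in the proof of the inclusion $L_{\AH}^{\alpha,p}(\mathbb R^{d+1}) \subset W^{\alpha,p}(\mathbb R^{d+1})$ in Section \ref{s:4-sobolev-spaces}, where its symbol was shown to lie in $S^0_{1,0}$ via the symbolic calculus for $G^m$ (Proposition \ref{composition} combined with Lemma \ref{symbol of fractional powers}). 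The operator $T_1$ is the Fourier multiplier with symbol $m(\omega) = \bigl(|\omega|^2/(1+|\omega|^2)\bigr)^{\alpha/2}$, which is smooth away from the origin and behaves like $|\omega|^\alpha$ near it; since $\alpha > 0$, the derivatives satisfy $|\partial^\beta m(\omega)| \lesssim |\omega|^{-|\beta|}$ uniformly on $\mathbb R^{d+1}\setminus\{0\}$, so the Mikhlin--H\"ormander multiplier theorem yields boundedness of $T_1$ on $L^p(\mathbb R^{d+1})$ for all $1 < p < \infty$.

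For the gradient version, I would apply the first inequality with $\alpha = 1$, which is admissible precisely when $1 < (d+1)/p$, i.e., $p < d+1$. This gives $\||z|^{-1} f\|_p \lesssim \|\AH^{1/2} f\|_p$; then invoking the estimate $\|\AH^{1/2} f\|_p \lesssim \|\nabla_{\AH} f\|_p$ from \eqref{inverse Riesz transform} completes the argument.

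The only genuine technical point is the verification of the Mikhlin condition for $m$ when $\alpha$ is not an even integer: the symbol then fails to be smooth at the origin, yet positivity of $\alpha$ precisely compensates and yields the required $|\omega|^{-|\beta|}$ scaling for every multi-index $\beta$, so the classical multiplier theorem applies without modification.
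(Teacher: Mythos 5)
Your proposal is correct and follows essentially the same route as the paper: reduce to classical Hardy, factor $(-\Delta_{\mathbb R^{d+1}})^{\alpha/2}\AH^{-\alpha/2}$ through the Bessel potential, handle $(1-\Delta_{\mathbb R^{d+1}})^{\alpha/2}\AH^{-\alpha/2}$ by the $G^m$ symbolic calculus, and obtain the gradient version from the $\alpha=1$ case together with \eqref{inverse Riesz transform}. The only cosmetic difference is that you verify the Mikhlin condition for $(-\Delta_{\mathbb R^{d+1}})^{\alpha/2}(1-\Delta_{\mathbb R^{d+1}})^{-\alpha/2}$ directly, whereas the paper simply cites Stein for this standard fact.
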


 \begin{proof}
 Note that the symbols of $(1-\Delta_{\mathbb R^{d+1}})^{\alpha/2}$ and $\AH^{-\alpha/2}$ belong to $S_{1,0}^\alpha$ and $S^{-\alpha}_{1,0}$ respectively. The composition law gives that the symbol of $(1-\Delta_{\mathbb R^{d+1}})^{\alpha/2}\AH^{-\alpha/2} $ belongs to $S^{0}_{1,0}$,  which implies that $(1-\Delta_{\mathbb R^{d+1}})^{\alpha/2}\AH^{-\alpha/2 }$  are bounded in $L^p(\mathbb R^{d+1})$ for $1<p<\infty$, (see \cite{Taylor, Taylor91}). In addition,  $(-\Delta_{\mathbb R^{d+1}})^{\alpha/2}(1-\Delta_{\mathbb R^{d+1}})^{-\alpha/2}$ are bounded in $L^p(\mathbb R^{d+1})$ for $1<p<\infty$ from \cite{Stein}. Hence, by the inequality \eqref{Hardy's inequality for laplacian} we have 
 \begin{align*}
     \||z|^{-\alpha} f(z)\|_{L^p(\mathbb R^{d+1})} 
     &\lesssim \| (-\Delta_{\mathbb R^{d+1}})^{\alpha/2} f\|_{L^p(\mathbb R^{d+1})}\\
     &\lesssim \| (-\Delta_{\mathbb R^{d+1}})^{\alpha/2}(1-\Delta_{\mathbb R^{d+1}})^{-\alpha/2}(1-\Delta_{\mathbb R^{d+1}})^{\alpha/2}\AH^{-\alpha/2 }\AH^{\alpha/2 }f\|_{L^p(\mathbb R^{d+1})}\\
     &\lesssim \|\AH^{\alpha/2 }f\|_{L^p(\mathbb R^{d+1})}.
      \end{align*}
For $\alpha=1$, using the first inequality in \eqref{inverse Riesz transform}, we have 
\begin{align*}
    \||z|^{-1} f(z)\|_{L^p(\mathbb R^{d+1})}\lesssim \|\AH^{1/2} f\|_{L^p(\mathbb R^{d+1})} \lesssim \sum_{0<|j|\leq d} \| A_jf \|_{L^p(\mathbb R^{d+1})}
    \lesssim \|\nabla_{\AH} f\|_{L^p(\mathbb R^{d+1})}.
\end{align*}
Hence, we completes the proof.
\end{proof}

\newcommand{\doi}[1]{DOI: \href{https://doi.org/#1}{\texttt{#1}}}
\newcommand{\zbl}[1]{Zbl: \href{https://zbmath.org/?q=an\%3A#1}{\texttt{#1}}}
\newcommand{\arxiv}[1]{arXiv: \href{https://arxiv.org/abs/#1}{\texttt{#1}}}

\end{document}